\newtheorem{definition}{Definition}
\newtheorem{Theorem}{Theorem}
\newtheorem{Lemma}{Lemma}
\newtheorem{Corollary}{Corollary}
\newcommand{\m}[1]{\mathbf{#1}}
\newcommand{\mc}[1]{\mathcal{#1}}
\newcommand{\mb}[1]{\mathbb{#1}}
\newcommand{\abs}[1]{\lVert{#1} \rVert}
\newcommand\numberthis{\addtocounter{equation}{1}\tag{\theequation}}
	\tikzstyle{frame} = [draw, -latex]
	\tikzstyle{line} = [draw]
	\tikzstyle{line2} = [draw, dashdotted]
	\tikzstyle{line3} = [draw, dashed]
	\tikzstyle{line3UD} = [draw, dashed]
	\tikzstyle{place} = [circle, draw=black, fill=white, thick, inner sep=2pt, minimum size=1mm]
	\tikzstyle{place2} = [circle, draw=black, fill=black, thick, inner sep=2pt, minimum size=1mm]
	\tikzstyle{placeRed} = [circle, draw=red, fill=red, thick, inner sep=2pt, minimum size=1mm]
	\tikzstyle{vertex} = [circle, draw=black, fill=black, thick, inner sep=2pt, minimum size=1mm]
\def\endthebibliography{%
  \def\@noitemerr{\@latex@warning{Empty `thebibliography' environment}}%
  \endlist
}
\title{\LARGE Further Analysis on Structure and Spectral Properties of Symmetric Graphs}
\author{Quoc Van Tran$^\dag$ and Hyo-Sung Ahn$^\ddag$
\thanks{$^\dag$ Department of Mechanical Engineering, Korea Advanced Institute of Science and Technology (KAIST),
        Daejeon, Republic of Korea.
        E-mails: {\tt\small quoctran@kaist.ac.kr; tvquoc9790@gmail.com}}
\thanks{$^\ddag$ School of Mechanical Engineering,
        Gwangju Institute of Science and Technology, Gwangju, Republic of Korea.
        E-mail: {\tt\small hyosung@gist.ac.kr}}
}
\begin{document}
\maketitle
\begin{abstract}
Graph is an abstract representation commonly used to model networked systems and structure. In problems across various fields, including computer vision and pattern recognition, and neuroscience, graphs are often brought into comparison (a process is called \textit{graph matching}) or checked for symmetry. Friendliness property of the associated adjacency matrices, specified by their spectral properties, is important in deriving a convex relaxation of the (intractable) discrete graph matching problem. In this work, we study unfriendliness properties of symmetric graphs by studying its relation to the underlying graph structure. It is revealed that a symmetric graph has two or more subgraphs of the same topology, and are adjacent to the same set of vertices. We then show that if adjacency matrices of symmetric graphs have distinct eigenvalues then there exist eigenvectors orthogonal to the vector of all ones, making them unfriendly. Relation of graph symmetry to uncontrollability of multi-agent systems under agreement dynamics with one controlled node is revisited.
Examples of both synthetic and real-world graphs are also given for illustrations.
\end{abstract}
\section{Introduction}
Graph is an abstract representation commonly used to model networked dynamical systems \cite{Friedkin2016Sci, Quoc2018tcns, Bullo2020, Quoc2020tcns, Mesbahi2010, HSAhn2019, Quoc2020auto, Quoc2021tcns} and structure \cite{DefferrardNIPS16,SeifertTSP21}. In problems across various fields, including computer vision and pattern recognition \cite{Foggia2014}, neuroscience \cite{Vogelstein2015PloSONE}, formation control and energy networks \cite{QuocCDC20}, graphs are often brought into comparison.
The adjacency matrix of a friendly graph on $n$ vertices, $\bm{A}\in \{0,1\}^{n\times n}$ has distinct eigenvalues and eigenvector not orthogonal to the vector of all ones, denoted as $\bm{1}_n\in \mb{R}^n$ \cite{Aflalo2015pnas} (Definition \ref{def:friendly_graphs}). A graph $\mc{G}$ is \textit{symmetric} if there exists a permutation matrix $\bm{\Pi}\neq \bm{I}_n$ so that $\bm{\Pi}\bm{A}(\mc{G})=\bm{A}(\mc{G})\bm{\Pi}$. In other words, the vertex permutation representing $\bm{\Pi}$ acting on the graph $\mc{G}$ leaves its graph structure unchanged. If there is no such an permutation matrix $\bm{\Pi}$, we say the graph $\mc{G}$ \textit{asymmetric} \cite{Erdos1963}. The work \cite{Aflalo2015pnas} shows that friendly graphs have asymmetric structures.

Consider two isomorphic graphs $\mc{G}_1$ and $\mc{G}_2$ on $n$ vertices with adjacency matrices being $\bm{A}$ and $\bm{B}$, respectively. \textit{Graph matching (GM)} asks to find a vertex correspondence associating vertices in two graphs, or equivalently a permutation matrix $\bm{\Pi}$ satisfying $\bm{\Pi}\bm{A}=\bm{B}\bm{\Pi}$ \cite{Foggia2014,Aflalo2015pnas}.  If the two graphs to be matched are symmetric, there are two or more $\bm{\Pi}$ that satisfy the preceding relation. Moreover, since the set of permutation matrices, say $\mc{P}_n$, has $n!$ elements, the discrete graph matching problem becomes intractable as $n$ increases. Heuristics have thus been proposed to approximately compute graph matching without theoretical guarantee of obtaining the true permutation matrix \cite{Foggia2014, Ullmann2011}. Spectral methods inspect graph matching by reliance on the similarity between the spectral properties of the adjacency matrices $\m{A}$ and $\m{B}$ \cite{Caelli2004pami, Duchenne2011pami,ZFan2020icml}. To efficiently solve the discrete GM problem, continuous relaxations of GM \cite{Aflalo2015pnas, Vogelstein2015PloSONE, Lyzinski2016pami,QuocCDC20} are used by the replacement of the permutation matrix set with a larger set of doubly stochastic matrices $\mc{D}:=\{\bm{P}\in \mb{R}^{n\times n}:\bm{P}\bm{1}_n=\bm{1}_n,~\bm{1}_n^\top\bm{P}=\bm{1}_n^\top\}$.
Convex relaxation of GM, which minimizes the adjacency disagreement $\abs{\bm{\Pi}\bm{A}-\bm{B}\bm{\Pi}}_F^2$ over the set $\mc{D}$, is often considered as being a convex problem.
For this relaxation, friendliness properties (we formally define this notion in Definition \ref{def:friendly_graphs}) of the adjacency matrices of graphs are crucial in ensuring the convergence, e.g., occurred using a gradient-based algorithm, to the actual graph matching of two isomorphic graphs \cite{Aflalo2015pnas}. Distributed computation for graph matching of two asymmetric graphs with real edge weights is presented in \cite{QuocCDC20}. 
Uncontrollability of multi-agent systems is also closely related to the symmetry of the graph of the system with respect to the control node\cite{Rahmani2009siam}. 

In this paper, we investigate further the unfriendliness properties of the adjacency matrices of symmetric graphs by exploring their relation to the symmetry of the underlying graph topologies.
The first contribution of this paper lies in the identification of the spectral properties of permutation matrices by inspection of their corresponding permutation graph representations that have cycles of even lengths. Second, we elaborate further symmetric structures of graphs in association with the the permutation graphs of permutation matrices. It is revealed that a symmetric graph has two or more subgraphs of the same graph structure, which are adjacent to the same set of vertices in the rest of the graph. Empirical experiments in random graphs suggest that as $n$ increases and the edges in the graph are not too dense nor sparse, its adjacency matrix has simple spectrum. We then show that due to the symmetric structure of symmetric graphs their adjacency matrices have an eigenvector orthogonal to $\bm{1}_n$, thus possessing unfriendliness. Third, relation of graph symmetry to uncontrollability of multi-agent systems under agreement dynamics with one controlled node is presented. Suppose that the graph of the other floating nodes is symmetric. Then, if the controlled node is adjacent to both vertices in some  pairs of corresponding vertices in two subgraphs of symmetry then the system is uncontrollable.

The remainder of this paper is as follows. Preliminaries are given in Section \ref{sec:prel}. Section \ref{sec:main} presents main observations on spectral properties of permutation matrices, and structures and unfriendly properties of symmetric graphs. We elaborate further the relation of graph symmetry to uncontrollability of multi-agent systems in Section \ref{sec:relation_to_uncontroll}. Several examples are provided in Section \ref{sec:examples} and Section \ref{sec:conclusion} concludes this paper.
\section{Preliminaries}\label{sec:prel}
\subsection{Permutation graphs}
Let the set of $n\times n$ permutation matrices be $\mc{P}_n$. Consider a set of vertices $\mc{V}=\{1,\ldots,n\}$.
We define $\pi:\mc{V}\rightarrow \mc{V}$ as the \textit{vertex mapping} associated with a permutation matrix $\bm{\Pi}\in \mc{P}_n$. In particular,  $\pi$ maps a vertex $i\in\mc{V}$ to a vertex $\pi(i)$ in $\mc{V}$ itself, which can represented in the two-line form
\begin{equation}
\left( \begin{matrix}
1 & 2 &\ldots &n\\
\pi(1) & \pi(2) &\ldots &\pi(n)
\end{matrix}\right).
\end{equation}
The permutation graph ${G}_\pi=(\mc{V},\mc{E}_\pi)$ associated with $\pi$ can be defined as follows. ${G}_\pi$ contains $n$ nodes indexed in $\mc{V}$. A directed edge $(i,j)\in \mc{E}_\pi$ if $\pi(i)=j$, i.e., node $i$ takes the position of $j$ under $\pi$. For example, for the permutation \begin{equation}\pi= \begin{bmatrix}
1 & 2&3 &4 &5\\
2 &1 &4 &5 &3
\end{bmatrix},\end{equation} the corresponding permutation matrix and permutation graph $G_\pi$ are respectively given as in Fig. \ref{fig:permutation_graph}.

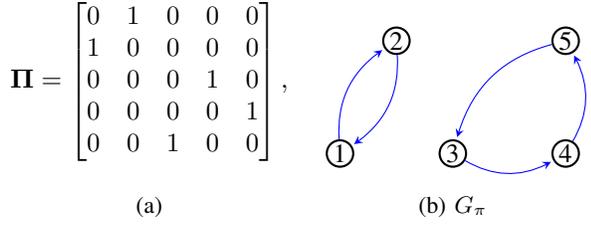
\begin{figure}[t]
\centering
\begin{subfigure}[b]{0.22\textwidth}
\centering
\begin{align*}
\bm{\Pi} = \left[\begin{matrix}
0 &1 &0 &0 &0 \\
1 &0 &0 &0 &0\\
0 &0 &0 &1 &0\\
0 &0 &0 &0 &1\\
0 &0 &1 &0 &0
\end{matrix}\right],
\end{align*}
\caption{}
\end{subfigure}
\begin{subfigure}[b]{0.22\textwidth}
\centering
\begin{tikzpicture}
 [>=stealth,
   shorten >=1pt,
   node distance=2cm,
   on grid,
   auto,
   every state/.style={draw=black, fill=black!6, very thick},scale =1.5
  ]
\tikzstyle{neuron}=[place,circle,inner sep=0,minimum size=10pt]
\node[neuron] (4) at (1,0.) [label=right:] {4};
\node[neuron] (3) at (0,0) [label=left:] {3};
\node[neuron] (1) at (-1.,0) [label=left:] {1};
\node[neuron] (2) at (-.5,1.) [] {2};
\node[neuron] (5) at (1.,1.) [] {5};
\path[->] 
   (1) edge[bend left,blue]     node                   {} (2)
   (2) edge[bend left,blue]     node                   {} (1)
   (3) edge[bend right,blue]     node                   {} (4)
   (4) edge[bend right,blue]     node                   {} (5)
   (5) edge[bend right,blue]     node                   {} (3);
\end{tikzpicture}
\caption{$G_\pi$}
\end{subfigure}
\caption{The permutation matrix $\bm{\Pi}$ and permutation graph $G_\pi$ representing the vertex mapping $\pi$.}
\label{fig:permutation_graph}
\end{figure}
Note that the $i$-th column of the identity matrix $\bm{I}_5$ appears in the $\pi(i)$-th column of $\bm{\Pi}$. The permutation graph $G_\pi$ contains two directed cycles $\mc{C}_\pi^1=\{1\rightarrow 2\rightarrow 1\}$ and $\mc{C}_\pi^2=\{3\rightarrow 4\rightarrow 5\rightarrow 3\}$, where the symbol $\rightarrow$ denotes the direction of the directed edges.
Further, since each node in a permutation graph $G_\pi$ has precisely in-degree and out-degree of $1$, one can easily verify the following lemma.
\begin{Lemma}\label{lm:independent_cycles}
Each connected component in the permutation graph $G_\pi$ must be a (directed) cycle, and hence any permutation can be expressed as a combination of independent cycles $G_\pi=\{\mc{C}_{\pi}^1,\mc{C}_{\pi}^2,\ldots\}$.
\end{Lemma}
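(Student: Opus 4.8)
The plan is to exploit the single structural fact already noted just before the statement: because $\pi:\mc{V}\to\mc{V}$ is a bijection, every node of $G_\pi$ has out-degree exactly $1$ (the unique edge $(i,\pi(i))$) and in-degree exactly $1$ (the unique edge $(\pi^{-1}(j),j)$). First I would fix an arbitrary vertex $i_0\in\mc{V}$ and follow the out-edges, generating the orbit $i_0,\pi(i_0),\pi^2(i_0),\ldots$. Since $\mc{V}$ is finite, by the pigeonhole principle two of these iterates must coincide, say $\pi^a(i_0)=\pi^b(i_0)$ with $a<b$; injectivity of $\pi$ then lets me cancel $a$ applications to obtain $\pi^{b-a}(i_0)=i_0$. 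Hence the orbit of $i_0$ closes up on itself, producing a directed cycle $\mc{C}=\{i_0\to\pi(i_0)\to\cdots\to i_0\}$.

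The second step is to show that this cycle is in fact the entire connected component containing $i_0$, i.e. that no further vertices or edges attach to it. This is the step where the in-degree condition does the real work. In a general out-functional graph one could have ``tails'' feeding into a cycle, but here every vertex lying on $\mc{C}$ already has its unique incoming edge used up by its predecessor on the cycle, and its unique outgoing edge used up by its successor; consequently no edge of $G_\pi$ can join $\mc{C}$ to a vertex outside it, in either direction. Therefore the vertices of $\mc{C}$ form a connected component, and since $i_0$ was arbitrary, every connected component of $G_\pi$ is a single directed cycle.

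Finally, because distinct orbits of $\pi$ are disjoint (two orbits sharing a vertex would, by the argument above, coincide), the cycles $\mc{C}_\pi^1,\mc{C}_\pi^2,\ldots$ partition $\mc{V}$ into vertex-disjoint pieces, which is precisely the statement that $\pi$ decomposes as a combination of independent cycles $G_\pi=\{\mc{C}_\pi^1,\mc{C}_\pi^2,\ldots\}$. I expect the only genuine obstacle to be making the ``no tails'' claim airtight; the key is to stress that it is the two-sided degree constraint—both in-degree and out-degree equal to $1$, a direct consequence of $\pi$ being a bijection rather than merely a function—that forbids any branching or dangling structure, so the decomposition into disjoint cycles is forced rather than merely possible.
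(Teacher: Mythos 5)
Your proof is correct and follows exactly the route the paper indicates: it invokes the fact that every vertex of $G_\pi$ has in-degree and out-degree equal to $1$ (a consequence of $\pi$ being a bijection), closes each orbit into a cycle via finiteness and injectivity, and uses the two-sided degree constraint to rule out tails. The paper leaves this verification to the reader with precisely that observation, so your argument is simply the fleshed-out version of the intended proof.
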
 
\subsection{Matching of undirected graphs}
Consider two undirected graphs without self-loops on $n$ vertices $\mathcal{G}_1=(\mathcal{V},\mathcal{E}_1,\bm{A})$ and $\mc{G}_2=(\mc{V},\mc{E}_2,\bm{B})$ having a common vertex set $\mc{V}=\{1,\ldots,n\}$ and edge sets $\mc{E}_1,\mc{E}_2\subseteq \mc{V}\times \mc{V}$, respectively. If an edge $(i,j)\in \mc{E}_1$ then $i$ and $j$ are said to be adjacent to each other, and we simply write $i\sim j$. The set of vertices adjacent to a vertex $i$ in $\mc{G}_1$ is denoted as $\mc{N}_i(\mc{G}_1)=\{j\in \mc{V}:(i,j)\in \mc{E}_1\}$. We shall use $\mc{E}(\mc{G}_1)$ and $\mc{E}_1$ interchangeably to denote the edge set of the graph $\mc{G}_1$.

We define $\bm{A}=[a_{ij}]$ and $\bm{B}=[b_{ij}]\in \{0,1\}^{n\times n}$ as the (symmetric) adjacency matrices of the graphs $\mc{G}_1$ and $\mc{G}_2$, respectively. In particular, the weight $a_{ij}=1$ (resp. $b_{ij}=1$) if the edge $(i,j)\in \mc{E}_1$ (resp. $(i,j)\in \mc{E}_2$) and $a_{ij}=0$ (resp. $b_{ij}=0$), otherwise.
One wants to find such a vertex mapping $\pi$, which associates a vertex $i$ in $\mc{G}_1$ to a vertex $\pi(i)$ in $\mc{G}_2$, and similarly corresponds each entry $(\bm{A})_{ij}$ to an entry $(\bm{B})_{\pi(i)\pi(j)}$. That is, each edge $(i,j)$ in $\mc{G}_1$ corresponds to an edge $(\pi(i),\pi(j))$ in $\mc{G}_2$. Further, in the case of exact matching, using the adjacency matrices, one has $\bm{A}=\bm{\Pi}^\top\bm{B}\bm{\Pi}$ \cite{Aflalo2015pnas}. Thus, we measure the \textit{adjacency disagreement} between two adjacency matrices $\bm{A}$ and $\bm{B}$ by the following \textit{distortion function}
\begin{equation}\label{eq:distortion_function}
\mathrm{dis}_{\mc{G}_1\rightarrow\mc{G}_2}(\bm{\Pi})=||\bm{A}-\bm{\Pi}^\top\bm{B}\bm{\Pi}||_F=||\bm{\Pi}\bm{A}-\bm{B}\bm{\Pi}||_F.
\end{equation}
If $\mathrm{dis}_{\mc{G}_1\rightarrow\mc{G}_2}(\bm{\Pi})=0$ for some $\bm{\Pi}\in \mc{P}_n$, then two graphs $\mc{G}_1$ and $\mc{G}_2$ are said to be \textit{isomorphic}.
Let $\mathrm{Iso}(\mc{G}_1\rightarrow\mc{G}_2)=\{\bm{\Pi}\in \mc{P}_n:\mathrm{dis}_{\mc{G}_1\rightarrow\mc{G}_2}(\bm{\Pi})=0\}$ be the collection of all \textit{isomorphisms} associating $\mc{G}_1$ and $\mc{G}_2$. Obviously, $\mc{G}_1$ is isomorphic to itself via the identity permutation $\bm{I}_n$. 

\subsection{Symmetric graphs}
The \textit{automorphism group} of the graph $\mc{G}$ is denoted as $\mathrm{Auto}(\mc{G}):=\mathrm{Iso}(\mc{G}\rightarrow\mc{G})\supseteq \{\bm{I}_n\}$.
A graph $\mc{G}=(\mc{V},\mc{E})$ is said to be \textit{symmetric}\footnote{The symmetry or asymmetry of a graph should be distinguished from the symmetry of its associated adjacency matrix. The latter itself is \textit{symmetric} simply when the graph $\mc{G}$ is undirected.} if it has a nontrivial automorphism group, i.e., $\mathrm{Auto}(\mc{G})\setminus \{\bm{I}_n\}\neq \emptyset$. That is, there exists a nonidentity $\bm{\Pi}\in \mathrm{Auto}(\mc{G})$ such that $\bm{\Pi}\bm{A}(\mc{G})-\bm{A}(\mc{G})\bm{\Pi}=\bm{0}$, where $\bm{A}(\mc{G})$ is the adjacency matrix of $\mc{G}$. In contrast, the graph $\mc{G}$ is \textit{asymmetric} if $\mathrm{Iso}(\mc{G}\rightarrow\mc{G})= \{\bm{I}_n\}$ \cite{Erdos1963}.

The degree, $d_i$, of a node $i$ is equal to the number of its neighbors, i.e., $d_i=|\mc{N}_i|$. Let $\bm{D}(\mc{G})=\mathrm{diag}(d_i)$ be the \textit{degree matrix} of the graph $\mc{G}$. Then, the Laplacian matrix of $\mc{G}$ is given as $\bm{L}=\bm{D}-\bm{A}$. Note importantly that the degree of a vertex is invariant under an automorphic mapping, i.e., $d_i=d_{\pi(i)}$ for any $i\in \mc{V}$ and $\bm{\Pi}\in \mathrm{Auto}(\mc{G})$. Thus, a nonidentity permutation $\bm{\Pi}\in \mathrm{Auto}(\mc{G})$ if the following holds
\begin{equation}\label{eq:sym_cond_laplacian}
\bm{\Pi}\bm{L}(\mc{G})=\bm{L}(\mc{G})\bm{\Pi}.
\end{equation}

We now define a class of \textit{friendly} graphs, as mentioned in the introduction.
\begin{definition}[Friendly Graphs]\cite{Aflalo2015pnas}\label{def:friendly_graphs}
A graph $\mc{G}$ is said to be \textit{friendly} if its adjacency matrix $\bm{A}(\mc{G})$ has simple spectrum and no eigenvector $\bm{v}_i\in \mb{R}^n$ orthogonal to $\bm{1}_n$.
\end{definition}
\begin{figure}
\centering
\includegraphics[height = 7.5cm]{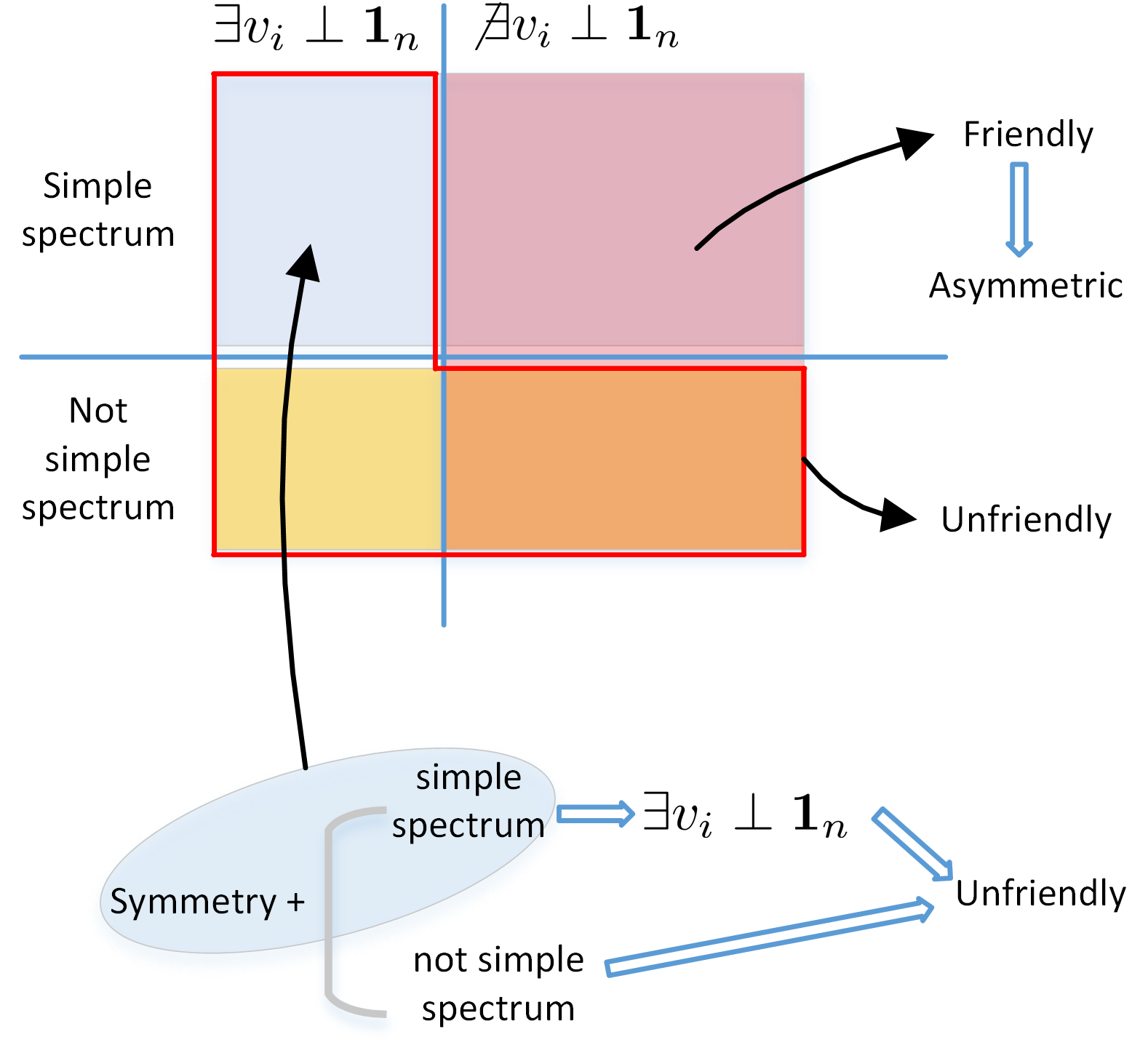}
\caption{Adjacency matrices of friendly graphs have simple spectrum and no eigenvector $\bm{v}_i\perp \bm{1}_n$. Friendly graphs are asymmetric. Though not all asymmetric graphs are friendly, for a large $n$, almost all graphs are asymmetric. Given a symmetric graph, if it has distinct eigenvalues then there exists an eigenvector orthogonal to $\bm{1}_n$.}
\label{fig:diagram}
\end{figure}
Graphs that are not friendly, i.e., having either repeated eigenvalues or an eigenvector, namely $\bm{v}_i$, orthogonal to $\bm{1}_n$, i.e., $\bm{v}_i^\top\bm{1}_n=0$, will be called \textit{unfriendly}. We explain further the notions of friendliness (light pink area) versus unfriendliness (the portions inside the red lines) using the diagram in Fig. \ref{fig:diagram}. It is shown in \cite[Lemma 1]{Aflalo2015pnas} that friendly graphs have asymmetric structures.  Furthermore, unfriendliness properties appear to be non-generic. This is in fact true for graphs with non-negative real edge weights \cite{Quoc2022Tcyb}. Therefore, one expects that the light pink region in Fig. \ref{fig:diagram} covers almost all the space.

In the following subsection, we empirically examine the possibility of random graphs possessing unfriendly properties (see \cite{Tao2017} for more theoretical analysis).
\subsection{Possibility of random Erd\"os-R\'enyi graphs possessing unfriendly properties}
Consider a class of random Erd\"os-R\'enyi graphs $\mc{G}(n,p)$ on $n$ vertices. Any two vertices $i$ and $j$ are randomly and independently connected by an edge with probability $p\in [0,1]$ \cite{Hofstad2016}. For each number of vertices $n$ from $2$ to $100$ and probabilities $p\in [0,1]$, $5000$ random graphs $\mc{G}(n,p)$ are created. We then empirically compute the possibilities of the adjacency matrix of $\mc{G}(n,p)$ containing repeated eigenvalues (Fig. \ref{fig:prob_repeated_eigenvalues}) and having an eigenvector orthogonal to $\bm{1}_n$ (Fig. \ref{fig:prob_orthog_eigenvectors}) with tolerance level of $10^{-4}$, respectively. It is observed that for large graphs ($n\geq 15$) with not too low or high edge possibilities ($0.1\leq p \leq 0.9$), their adjacency matrices are likely to have distinct eigenvalues. Further, the possibility of these adjacency matrices having an eigenvector orthogonal to $\bm{1}_n$ is relatively low (less than $1\%$).

\begin{figure}[t]
\centering
\begin{subfigure}{0.43\textwidth}
\centering
\includegraphics[height = 5cm]{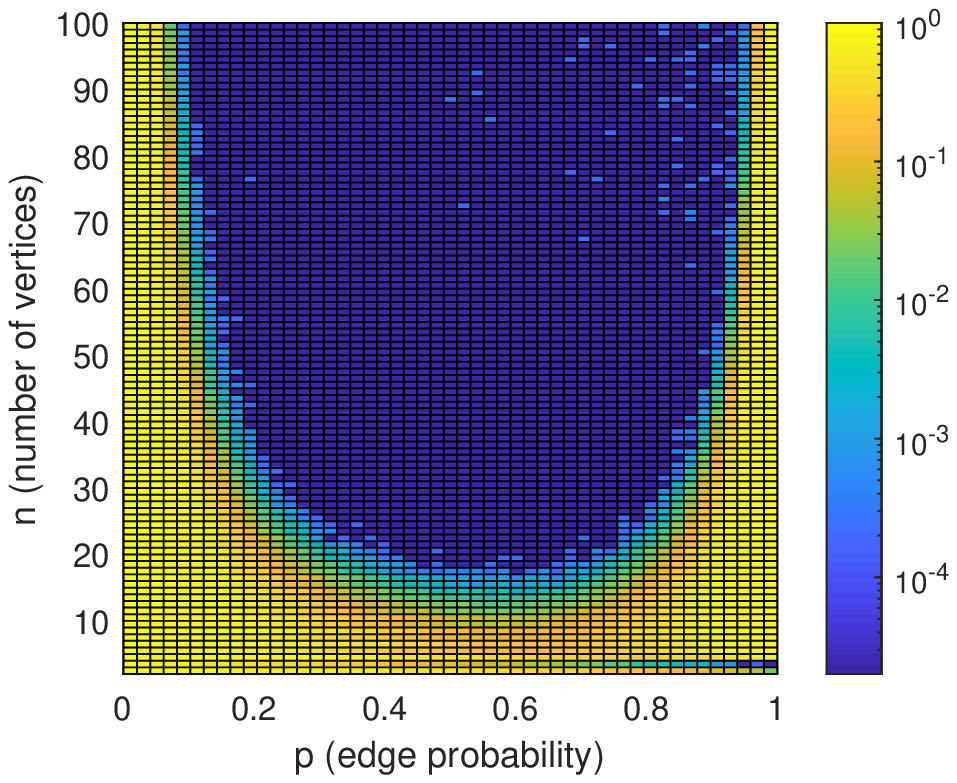}
\caption{}
\label{fig:prob_repeated_eigenvalues}
\end{subfigure}
\begin{subfigure}{0.43\textwidth}
\centering
\includegraphics[height = 5cm]{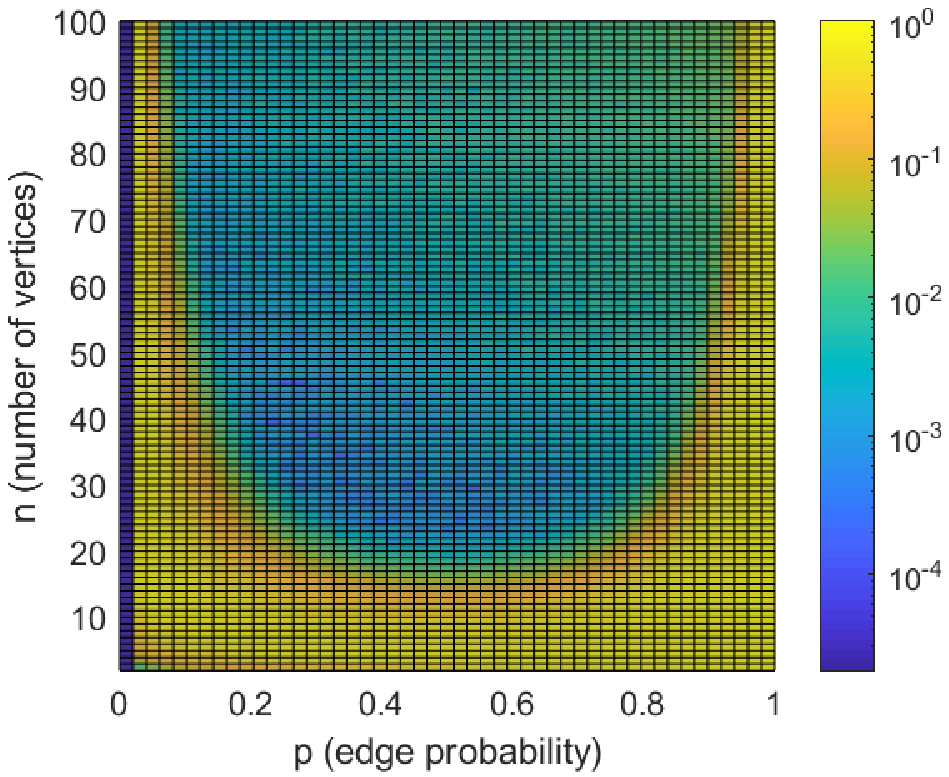}
\caption{}
\label{fig:prob_orthog_eigenvectors}
\end{subfigure}
\caption{Probability of the unweighted adjacency matrix of $\mc{G}(n,p)$ (a) containing repeated eigenvalues, (b) having an eigenvector orthogonal to $\bm{1}_n$. Probabilities are averaged over $5000$ experiments.} 
\label{fig:prob_unfriendliness}
\end{figure}

\section{Structure and unfriendliness of symmetric graphs}\label{sec:main}
This section first explores symmetric structures of symmetric graphs. We then elaborate further that if the adjacency matrix of a symmetric graph has simple spectrum, then it ought to contain an eigenvector orthogonal to $\bm{1}_n$, making the graph unfriendly (see Fig. \ref{fig:diagram}).
\subsection{Structure of symmetric graphs}
We first verify the existence of an eigenvalue of $-1$ of permutation matrices whose permutation graphs contain even length cycles.
\begin{Lemma}\label{lm:minus_one_eigenvalue}
A permutation matrix $\bm{\Pi}$ has an eigenvalue of $-1$ if and only if its permutation graph has an even length cycle.
\end{Lemma}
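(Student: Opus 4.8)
The plan is to exploit the cycle decomposition of $\bm{\Pi}$ guaranteed by Lemma~\ref{lm:independent_cycles}. First I would relabel the vertices so that the vertices belonging to each directed cycle $\mc{C}_\pi^j$ of length $k_j$ are made contiguous. This relabeling of $\mc{V}$ amounts to a permutation similarity $\bm{P}^\top\bm{\Pi}\bm{P}$, which leaves the spectrum unchanged, and it brings $\bm{\Pi}$ into block-diagonal form $\mathrm{diag}(\bm{C}_{k_1},\bm{C}_{k_2},\ldots)$, where $\bm{C}_{k_j}$ is the $k_j\times k_j$ basic cyclic (shift) matrix realizing a single cycle of length $k_j$. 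Since the spectrum of a block-diagonal matrix is the union of the spectra of its blocks, it suffices to characterize when $-1$ is an eigenvalue of a single cyclic block.

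The second step is to compute the spectrum of $\bm{C}_k$. Because $\bm{C}_k^{\,k}=\bm{I}_k$ and the cycle is a single orbit, the characteristic polynomial of $\bm{C}_k$ is $\lambda^{k}-1$, so its eigenvalues are precisely the $k$-th roots of unity $e^{2\pi\mathrm{i}\ell/k}$, $\ell=0,\ldots,k-1$. Consequently $-1$ is an eigenvalue of $\bm{C}_k$ if and only if $(-1)^{k}=1$, i.e.\ if and only if $k$ is even.

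Combining the two steps delivers both directions simultaneously: $-1\in\mathrm{spec}(\bm{\Pi})$ if and only if $-1\in\mathrm{spec}(\bm{C}_{k_j})$ for some $j$, which holds if and only if some cycle length $k_j$ is even, i.e.\ if and only if $G_\pi$ contains an even-length cycle. I do not expect a serious obstacle here; the only point demanding care is to justify the block-diagonalization cleanly---verifying that reordering the vertices cycle-by-cycle conjugates $\bm{\Pi}$ by a permutation matrix and hence preserves its eigenvalues---together with the elementary characteristic-polynomial computation $\lambda^{k}-1$ for a single-cycle block. An equivalent and slightly slicker finish avoids naming the roots of unity altogether: from the block form one computes $\det(\bm{\Pi}+\bm{I}_n)=\prod_j\det(\bm{C}_{k_j}+\bm{I}_{k_j})=\prod_j\bigl(1-(-1)^{k_j}\bigr)$, and this determinant vanishes exactly when some factor $1-(-1)^{k_j}$ is zero, which happens precisely for even $k_j$.
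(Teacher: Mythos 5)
Your proof is correct, but it takes a genuinely different route from the paper's. The paper argues at the level of individual vectors: for the ``if'' direction it exhibits the explicit alternating eigenvector $\bm{v}=[1,-1,1,\ldots,-1,0,\ldots,0]^\top$ supported on an even cycle, which satisfies $\bm{\Pi}\bm{v}=-\bm{v}$; for the ``only if'' direction it assumes all cycles are odd and chases components around each cycle, showing that $\bm{\Pi}\bm{v}=-\bm{v}$ forces $v_1=-v_2=\cdots=v_l=-v_1$, hence $\bm{v}=\bm{0}$, a contradiction. You instead work at the level of the whole matrix: permutation similarity brings $\bm{\Pi}$ into block-diagonal form with one cyclic shift block $\bm{C}_{k_j}$ per cycle, and the spectrum of $\bm{C}_k$ consists of the $k$-th roots of unity, so $-1$ occurs iff some $k_j$ is even. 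Both arguments rest on the cycle decomposition of Lemma~\ref{lm:independent_cycles}. Your approach buys more: it produces the full spectrum of any permutation matrix, so Corollary~\ref{coroll:number_of_eig} (the multiplicity of $-1$ equals the number of even-length cycles) falls out for free, whereas the paper extracts it separately. The paper's approach buys elementarity and reusability: it avoids characteristic polynomials entirely, and its explicit eigenvectors (the alternating one here, and the all-ones-on-a-cycle vector for eigenvalue $1$) are reused verbatim in the proof of Theorem~\ref{thm:orthogonal_eigenvector}. One small polish: your justification that the characteristic polynomial of $\bm{C}_k$ is $\lambda^k-1$ (``$\bm{C}_k^k=\bm{I}_k$ and a single orbit'') should be tightened---either observe that $\bm{e}_1,\bm{C}_k\bm{e}_1,\ldots,\bm{C}_k^{k-1}\bm{e}_1$ are linearly independent so the minimal polynomial already has degree $k$, or simply note that $\bm{C}_k$ is the companion matrix of $\lambda^k-1$. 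Your determinant finish $\det(\bm{\Pi}+\bm{I}_n)=\prod_j\bigl(1-(-1)^{k_j}\bigr)$ is also correct and is the slickest of the three arguments.
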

\begin{proof}
We show that it is always possible to construct an eigenvector $\bm{v}\in \mb{R}^n$ satisfying $\bm{\Pi}\bm{v}=-\bm{v}$, if the permutation graph $G_{\bm{\pi}}$ has a (directed) cycle, $\mc{C}_{\pi}^1$, of even length, say $l=2,4,\ldots~(l\leq n)$. Without loss of generality, we relabel the vertices in $\mc{V}$ so that the vertices and edges of $\mc{C}_{\pi}^1$ are $\mc{V}(\mc{C}_{\pi}^1)=\{1,2,\ldots,l\}$ and $\mc{E}(\mc{C}_{\pi}^1)=\{1\rightarrow 2,2\rightarrow 3,\ldots,l\rightarrow 1\}$, respectively. Now, we consider a vector of the form 
\begin{equation}
\bm{v}=[\underbrace{1,-1,1,\ldots,-1}_{l},\underbrace{0,\ldots,0}_{n-l}]^\top.
\end{equation}
It is easy to see that under the permutation cycle $\mc{C}_{\pi}^1$ we have $\bm{\Pi}\bm{v}=[-1,1,-1,\ldots,1,0,\ldots,0]^\top=-\bm{v}$. Thus, $\bm{v}$ is an eigenvector of $\bm{\Pi}$ corresponding to the eigenvalue $-1$. 

On the other hand, if $G_\pi$ has only odd length cycles\footnote{We will use the convention that the mapping of a vertex to itself is considered as a permutation cycle of length zero.} we shall show that it is not possible to construct such a $\bm{v}\not\equiv \bm{0}$ so that $\bm{\Pi}\bm{v}=-\bm{v}$. Indeed, consider an arbitrary cycle $\mc{C}_\pi^1$ of odd length $l$ and a numbering of the vertices in $\mc{V}$ so that $\mc{V}(\mc{C}_\pi^1)=\{1,2,\ldots,l\}$ and $\mc{E}(\mc{C}_\pi^1)=\{1\rightarrow 2,2\rightarrow 3,\ldots,l\rightarrow 1\}$. Then, for any vector $\bm{v}=[v_1,\ldots,v_n]^\top\in \mb{C}^n$, the relation $\bm{\Pi}\bm{v}=-\bm{v}$ leads to 
\begin{equation}
v_1=-v_2=v_3=\ldots=v_l=-v_1
\end{equation} due to the permutation under $\mc{C}_\pi^1$. It follows that $v_1=v_2=\ldots=v_l =0$. Since all cycles in $G_\pi$ have odd lengths, following the preceding argument one has $\bm{v}\equiv \bm{0}$, which is a contradiction.
\end{proof}
\begin{Corollary}\label{coroll:number_of_eig}
The number of eigenvalues $-1$ of a permutation matrix $\bm{\Pi}$ is equal to the number of independent even length cycles in its permutation graph $G_\pi$.
\end{Corollary}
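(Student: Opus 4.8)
The plan is to combine the cycle decomposition of Lemma \ref{lm:independent_cycles} with the eigenvector construction used in the proof of Lemma \ref{lm:minus_one_eigenvalue}. First I would relabel the vertices in $\mc{V}$ so that $\bm{\Pi}$ becomes block diagonal, $\bm{\Pi}=\mathrm{diag}(\bm{\Pi}_1,\ldots,\bm{\Pi}_m)$, where each block $\bm{\Pi}_k$ is the cyclic permutation matrix associated with the $k$-th independent cycle $\mc{C}_\pi^k$ of length $l_k$. Since the blocks act on pairwise disjoint coordinate sets, the eigenspace of $\bm{\Pi}$ for the eigenvalue $-1$ splits as the direct sum of the $(-1)$-eigenspaces of the individual blocks. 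Consequently, the multiplicity of $-1$ for $\bm{\Pi}$ is exactly the sum, over all cycles, of the per-block multiplicities, and it suffices to determine the contribution of a single cyclic block.

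Next I would analyze one cyclic block $\bm{\Pi}_k$ of length $l$. The relation $\bm{\Pi}_k\bm{v}=-\bm{v}$ forces the alternating recursion $v_1=-v_2=v_3=\cdots$ around the cycle; closing the cycle imposes $v_1=(-1)^l v_1$. When $l$ is even this is consistent and leaves a one-dimensional solution space spanned by $[1,-1,\ldots,-1]^\top$, whereas when $l$ is odd it forces $v_1=v_2=\cdots=v_l=0$, exactly as in Lemma \ref{lm:minus_one_eigenvalue}. Equivalently, the eigenvalues of an $l\times l$ cyclic permutation matrix are the $l$-th roots of unity $e^{2\pi\mathrm{i}k/l}$, all simple, and $-1$ occurs if and only if $l$ is even, in which case it occurs precisely once (at $k=l/2$). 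Hence each even-length cycle contributes exactly $1$ to the multiplicity and each odd-length cycle contributes $0$.

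Summing over the blocks then yields that the multiplicity of the eigenvalue $-1$ equals the number of even-length cycles in $G_\pi$. As a sanity check on the geometric multiplicity, the eigenvectors produced by distinct even cycles are supported on disjoint vertex sets and are therefore linearly independent, giving an explicit basis of the $(-1)$-eigenspace whose cardinality matches the count. The only point that requires care is the \emph{exactly once} claim per even cycle, i.e. ruling out an accidental higher-dimensional contribution from a single even cycle; this is the step I expect to be the main obstacle, and it is secured either by the simplicity of the roots of unity or, self-containedly, by observing that the recursion $v_{i+1}=-v_i$ around the cycle determines the entire block up to the single scalar $v_1$.
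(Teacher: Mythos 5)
Your proposal is correct. Note that the paper itself gives no proof of this corollary: it is stated as an immediate consequence of Lemma \ref{lm:independent_cycles} and the proof of Lemma \ref{lm:minus_one_eigenvalue}, whose argument only exhibits one alternating eigenvector per even cycle (a lower bound) and rules out $-1$ when all cycles are odd. Your proof uses the same ingredients --- the cycle decomposition and the alternating recursion $v_{i+1}=-v_i$ --- but supplies exactly the step the paper leaves implicit: the block-diagonal splitting of the $(-1)$-eigenspace across cycles, together with the ``exactly one per even cycle'' claim (via the simplicity of the roots of unity $e^{2\pi\mathrm{i}k/l}$ of a cyclic block, or equivalently via the recursion determining the whole block up to the scalar $v_1$). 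This upgrades the paper's construction from ``at least as many'' to the exact equality of multiplicities that the corollary asserts, and since a permutation matrix is orthogonal, hence diagonalizable, your count is simultaneously the geometric and algebraic multiplicity. One small point of friction with the paper, not with your argument: the paper's footnote declares fixed points to be cycles of ``length zero,'' so a literal reading of ``even length cycles'' would wrongly include them; your treatment (a fixed point is a $1\times 1$ block with sole eigenvalue $1$, contributing nothing) handles this correctly, and the corollary should be read as counting cycles of length $2,4,\ldots$ only.
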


We now show the symmetric structure of symmetric graphs and the existence of cycles of length $2$ in the permutation graph associated with $\bm{\Pi}\in \mathrm{Auto}(\mc{G})$.
\begin{Theorem}\label{thm:existence_cycle_length2}
Let $\mc{G}$ be a symmetric graph. Then, there exists a nontrivial $\bm{\Pi}\in \mathrm{Auto}(\mc{G})$ such that the corresponding permutation graph $G_\pi$ contains a cycle of length $2$.
\end{Theorem}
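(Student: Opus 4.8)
The plan is to work entirely inside the group $\mathrm{Auto}(\mathcal{G})\le \mathcal{P}_n$, which is nontrivial precisely because $\mathcal{G}$ is symmetric. The key observation is that a length-$2$ cycle in a permutation graph is exactly what an \emph{involution} produces: by Lemma~\ref{lm:independent_cycles} the permutation graph $G_\pi$ of any $\bm{\Pi}\in\mathrm{Auto}(\mathcal{G})$ is a disjoint union of directed cycles, and if $\bm{\Pi}^2=\bm{I}_n$ then every such cycle has length dividing $2$. Hence a nonidentity involution has at least one length-$2$ cycle (and no longer ones). So the entire task reduces to the purely group-theoretic statement that $\mathrm{Auto}(\mathcal{G})$ contains a nonidentity involution.

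To produce one, I would start from any nonidentity $\bm{\Pi}_0\in\mathrm{Auto}(\mathcal{G})$, which exists since $\mathcal{G}$ is symmetric, and pass to a suitable power. As $\mathrm{Auto}(\mathcal{G})$ is finite, $\bm{\Pi}_0$ has some order $k\ge 2$, equal to the least common multiple of the lengths of the cycles of $G_{\pi_0}$. If $k$ is even, then $\bm{\Pi}_0^{k/2}$ squares to $\bm{\Pi}_0^{k}=\bm{I}_n$ but is not the identity, so it is the desired nonidentity involution; the $\pm1$ pattern it induces on each paired orbit is exactly the eigenvector construction already used in Lemma~\ref{lm:minus_one_eigenvalue}. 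I would also use this point to make contact with the structural description advertised earlier: because automorphisms preserve adjacency and degree (cf.\ \eqref{eq:sym_cond_laplacian}), the two endpoints of any resulting transposition, together with their orbits, yield the ``two subgraphs of the same topology adjacent to the same set of vertices.''

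The step I expect to be the real obstacle is the complementary case in which \emph{every} nonidentity element of $\mathrm{Auto}(\mathcal{G})$ has odd order, i.e.\ $|\mathrm{Auto}(\mathcal{G})|$ is odd. Then no power of a single automorphism can ever create a length-$2$ cycle, since an odd-length cycle $m$ splits under $\bm{\Pi}_0^{j}$ into $\gcd(m,j)$ cycles of length $m/\gcd(m,j)$, and $2\nmid m$. Closing the argument therefore requires either ruling out odd-order automorphism groups under the hypotheses in force, or exhibiting an explicit combinatorial swap of two isomorphic, identically-attached subgraphs that is guaranteed to be an involutive automorphism. This is genuinely delicate: Frucht's theorem realizes arbitrary finite groups --- including $\mathbb{Z}/3$, which contains no involution at all --- as graph automorphism groups, so some structural input beyond bare symmetry seems to be needed to force a transposition, and pinning down exactly which input the statement relies on is the crux of the proof.
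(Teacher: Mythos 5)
Your reduction is the right way to frame this statement, and the obstacle you flag at the end is not a defect of your argument --- it is a genuine counterexample to the theorem as stated. An automorphism whose permutation graph contains a $2$-cycle has even order, so (by the power-taking argument you give) $\mathrm{Auto}(\mc{G})$ contains such an automorphism if and only if it contains a nonidentity involution, i.e., if and only if $|\mathrm{Auto}(\mc{G})|$ is even. By Frucht's theorem there exist graphs with $\mathrm{Auto}(\mc{G})\cong \mb{Z}/3\mb{Z}$; such a graph is symmetric, yet every nonidentity automorphism has order $3$, so by Lemma \ref{lm:independent_cycles} its permutation graph decomposes into fixed points and $3$-cycles only, and no automorphism contains a $2$-cycle. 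The paper's proof of Theorem \ref{thm:existence_cycle_length2} breaks exactly where you predicted it must. Starting from a $3$-cycle $(i,j,k)$ of some automorphism $\pi_1$, it correctly handles the case where $\pi_1$ fixes all other vertices (there $\mc{N}_i=\mc{N}_j=\mc{N}_k$ outside $\{i,j,k\}$, and the transposition $i\leftrightarrows j$ is indeed an automorphism), but in the general multi-cycle case it simply asserts that one can ``permute vertex $i$ with $j$, and their $\kappa$-hop neighbors, with each other without changing the structure of $\mc{G}$.'' That asserted swap is precisely the involution whose existence is in question, and for a Frucht graph with automorphism group $\mb{Z}/3\mb{Z}$ no such swap can preserve adjacency --- otherwise a group of order $3$ would contain an element of order $2$. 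The paper's argument is therefore circular at its crucial step, and no argument can close it under the stated hypothesis of bare symmetry.

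Two further points are worth recording. First, your even-order branch is a complete and correct proof of the statement that is actually needed downstream: Theorem \ref{thm:orthogonal_eigenvector} assumes $\bm{A}(\mc{G})$ has simple spectrum, and under that hypothesis every automorphism $\bm{\Pi}$ satisfies $\bm{\Pi}\bm{v}_i=\pm\bm{v}_i$ on each of the $n$ independent eigenvectors $\bm{v}_i$, hence $\bm{\Pi}^2=\bm{I}_n$. So simple spectrum forces every nontrivial automorphism to be an involution, whose permutation graph consists of fixed points and $2$-cycles; the odd-order obstruction vanishes, and Lemma \ref{lm:minus_one_eigenvalue} applies to give the eigenvalue $-1$. (Consistently, on your counterexample class the adjacency matrix must have repeated eigenvalues, so those graphs are still unfriendly via the other clause of Definition \ref{def:friendly_graphs}.) Second, if Theorem \ref{thm:existence_cycle_length2} is to stand alone as a structural statement, the honest hypothesis is that $|\mathrm{Auto}(\mc{G})|$ is even (or that $\bm{A}(\mc{G})$ has simple spectrum), not mere symmetry; your proposal, unlike the paper's proof, makes this boundary explicit and identifies exactly which structural input is missing.
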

\begin{proof}
Since $\mc{G}$ is symmetric there exist some nontrivial $\bm{\Pi}_1\in \mathrm{Auto}(\mc{G})$. Suppose that the permutation graph $G_{\pi_1}$ corresponding to $\bm{\Pi}_1$, without loss of generality, has a cycle of length $3$ (or higher), e.g., \begin{equation}
\mc{C}_{\pi_1}^1=\left(\{i,j,k\},\{i\rightarrow j,j\rightarrow k,k\rightarrow i\}\right).
\end{equation}
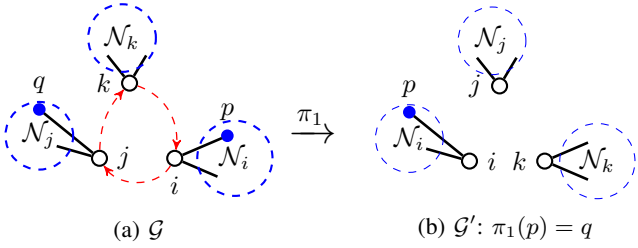
\begin{figure}[t]
\centering
\begin{subfigure}[h]{0.21\textwidth}
\centering
\begin{tikzpicture}[scale=1,>=stealth',auto,node distance=3cm,
  thick,main node/.style={circle,draw,font=\sffamily\Large\bfseries}]
\node[place] (i) at (1,0.) [label=below:$i$] {};
\node[place, blue, scale = 0.7] (i1) at (1.7,0.3) [label=above:$p$] {};
\node[] (i2) at (1.7,-0.3) [] {};
\node[place] (j) at (0,0) [label=right:$j$] {};
\node[place, blue, scale = 0.7] (j1) at (-0.8,0.65) [label=above:$q$] {};
\node[] (j2) at (-0.7,0.2) [] {};
\node[place] (k) at (0.4,1.) [label=left:$k$] {};
\node[] (k1) at (0.7,1.5) [] {};
\node[] (k2) at (0,1.5) [] {};
\draw (i) [line width=1pt] -- node [left] {} (i1) 
	  (i) --node [left] {}(i2);
\draw[blue,dashed] (1.8,0.) circle (0.5) node[black]{$\mc{N}_i$};
\draw (k) [line width=1pt] -- node [left] {} (k1) (k)--(k2);
\draw[blue,dashed] (0.3,1.6) circle (0.45) node[black]{$\mc{N}_k$};
\draw (j) [line width=1pt] -- node [left] {} (j1) (j)--(j2);
\draw[blue,dashed] (-0.8,0.3) circle (0.45) node[black]{$\mc{N}_j$};
\path[every node/.style={font=\large},->, dashed, red, line width=0.5pt]
    (i) edge[bend left = 60] node [left] {} (j)
    (j) edge[bend left = 20] node [left] {} (k)
    (k) edge[bend left = 40] node [left] {} (i);
\end{tikzpicture}
\caption{$\mc{G}$}
\label{fig:three_vertices_w_neighbors}
\end{subfigure} 
\begin{subfigure}[h]{0.05\textwidth}\begin{Large}$\xrightarrow{\pi_1}$\end{Large}
\end{subfigure} 
\begin{subfigure}[h]{0.21\textwidth}
\centering
\begin{tikzpicture}[scale=1]
\node[place] (i) at (1,0.) [label=left:$k$] {};
\node[] (i1) at (1.7,0.3) [] {};
\node[] (i2) at (1.7,-0.3) [] {};
\node[place] (j) at (0,0) [label=right:$i$] {};
\node[place, blue, scale = 0.7] (j1) at (-0.8,0.65) [label=above:$p$] {};
\node[] (j2) at (-0.7,0.2) [] {};
\node[place] (k) at (0.4,1.) [label=left:$j$] {};
\node[] (k1) at (0.7,1.5) [] {};
\node[] (k2) at (0,1.5) [] {};
\draw (i) [line width=1pt] -- node [left] {} (i1) (i)--(i2);
\draw[blue,dashed] (1.7,0.) circle (0.5) node[black]{$\mc{N}_k$};
\draw (k) [line width=1pt] -- node [left] {} (k1) (k)--(k2);
\draw[blue,dashed] (0.3,1.6) circle (0.45) node[black]{$\mc{N}_j$};
\draw (j) [line width=1pt] -- node [left] {} (j1) (j)--(j2);
\draw[blue,dashed] (-0.8,0.3) circle (0.45) node[black]{$\mc{N}_i$};
\end{tikzpicture}
\caption{$\mc{G}^\prime$: $\pi_1(p)=q$}
\end{subfigure}
\caption{Vertices $\{i,j,k\}$ and their neighbor sets in $\mc{G}$. The graph $\mc{G}^\prime$ is obtained from $\mc{G}$ via the permutation $\pi_1$ (red directed cycle). The topologies of $\mc{G}^\prime$ and $\mc{G}$ are the same.}
\label{fig:three_vertices}
\end{figure}
The vertices $\{i,j,k\}$ and their corresponding (implicit) neighbor sets in $\mc{G}$ are depicted in Fig. \ref{fig:three_vertices_w_neighbors}. We shall show that there exists a $\bm{\Pi}_2\in \mathrm{Auto}(\mc{G})$ so that $G_{\pi_2}$ contains a cycle of length $2$.

Let $\mc{G}^\prime$ be obtained from $\mc{G}$ via the vertex permutation $\pi_1$. Now if the permutation graph $G_{\pi_1}$ has only one cycle $\mc{C}_{\pi_1}^1$, we show that the vertices $\{i,j,k\}$ are adjacent to the same set of vertices. Indeed, it follows from the symmetry of $\mc{G}$ that $\forall p\in \mc{N}_i(\mc{G})\setminus \{j,k\},$ the edge $(i,p)\in \mc{E}(\mc{G})$ implies that  
\begin{equation}
(\pi_1(i),\pi_1(p))=(j,p)\in \mc{E}(\mc{G}^\prime).
\end{equation} Thus, $p$ is also a neighbor of $j$ and if $i\sim k$ so is $j\sim k$, or i.e., $\mc{N}_i(\mc{G})= \mc{N}_j(\mc{G})$. Similarly, one can prove that $\mc{N}_i(\mc{G})= \mc{N}_j(\mc{G})= \mc{N}_k(\mc{G})$. That is, vertices $i,j$ and $k$ are adjacent to the same set of vertices, and the connectivity between them is either an empty or a cycle graph.  

On the other hand, for every vertex $p\in \mc{N}_i(\mc{G}) $ such that $p\not\in \mc{N}_j(\mc{G})$, then the condition
\begin{equation}
(\pi_1(i),\pi_1(p))=(j,\pi_1(p))\in \mc{E}(\mc{G}^\prime)
\end{equation}
indicates that the vertex $j\sim\pi_1(p)$ in $\mc{G}^\prime$. As a result, since $p\not\in \mc{N}_j(\mc{G})$ there exists $q\in \mc{N}_j(\mc{G})$ such that $q=\pi_1(p)$. That is, the vertex $p\in\mc{N}_i(\mc{G})$ must take the position of the vertex $q\in\mc{N}_j(\mc{G})$ under the permutation $\pi_1$ (see Fig. \ref{fig:three_vertices}). An analogous permutation of each $\kappa$-hop neighbor, $\kappa=2,\ldots$, of $i$ to the corresponding $\kappa$-hop neighbor of $j$, can also be obtained similarly. Consequently, the permutation graph $G_{\pi_1}$ has more than one permutation cycles including $\mc{C}_{\pi_1}^1$. Furthermore, by employing a similar argument for the permutation from $j$ to $k$ ($j\rightarrow k$), we conclude that the graph structures induced by the neighbors (including lower-hop neighbors if they exist) of the three vertices $i,j,$ and $j$, respectively, should be the same, and the permutation of them via $3$-length cycles leaves $\mc{G}$ unchanged, i.e., $\mc{G}= \mc{G}^\prime$.

As a result, in either case, we can simply permute, say vertex $i$ with $j$ ($i\leftrightarrows j$), and their $\kappa$-hop neighbors ($\kappa=1,2,\ldots$), with each other without changing the structure of $\mc{G}$. This constitutes a vertex mapping $\pi_2$ whose permutation graph contains $2$-length cycles.
\end{proof}

It is noted from the above result that a symmetric graph has $l$ subgraphs ($l\geq 2$), denoted as $\{\mc{G}_1^S,\ldots,\mc{G}_l^S\}$, formed by the respective disjoint subsets of one or more vertices, $\{\mc{V}_1^S,\ldots,\mc{V}_l^S\}$, of the same graph structure. The vertices in each pair of such subgraphs are connected to the remaining vertices, i.e., $\mc{V}\setminus \{\mc{V}_1^S,\ldots,\mc{V}_l^S\}$, in the same way. There may be also edges between pairs of corresponding vertices in the two subgraphs. For example, the network in Fig. \ref{fig:example_symmetric_graph} contains two subgraphs on vertices $\mc{V}^S_1=\{1,2,3\}$ and $\mc{V}^S_2=\{4,5,6\}$ of the same triangulation structure. These two subgraphs are connected to the same portion of the rest of the vertices in the graph. There may be also inter-edges between two subgraphs connecting some pairs of corresponding vertices, e.g., $(2,5)$ and $(1,4)$. We will refer to such subgraphs as \textit{subgraphs of symmetry} and summarize this result in the following.
\begin{Corollary}\label{coroll:copies_of_same_graph}
Any symmetric graph has two or more subgraphs of the same graph structure, which are adjacent to the same set of vertices in the rest of the graph.
\end{Corollary}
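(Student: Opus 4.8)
The plan is to obtain the statement as a direct packaging of Theorem \ref{thm:existence_cycle_length2}. That theorem supplies a nontrivial $\bm{\Pi}\in \mathrm{Auto}(\mc{G})$ whose permutation graph $G_\pi$ contains a $2$-cycle; moreover, its proof realizes $\bm{\Pi}$ as an involution that swaps a seed pair $i\leftrightarrows j$ together with their matched $\kappa$-hop neighbor shells. By Lemma \ref{lm:independent_cycles}, the permutation graph of such an involution splits into independent components that are either fixed points or $2$-cycles, so I would first record its transposed pairs $\{a_1,b_1\},\dots,\{a_m,b_m\}$ (with $m\geq 1$), which are pairwise disjoint because the cycles are independent.

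I would then set $\mc{V}_1^S=\{a_1,\dots,a_m\}$ and $\mc{V}_2^S=\pi(\mc{V}_1^S)=\{b_1,\dots,b_m\}$, which are disjoint, and take $\mc{G}_1^S,\mc{G}_2^S$ to be the induced subgraphs on these sets. That they share the same structure is immediate from edge preservation: since $\bm{\Pi}\bm{A}(\mc{G})=\bm{A}(\mc{G})\bm{\Pi}$ and $\pi(a_k)=b_k$, we have $(a_k,a_l)\in\mc{E}(\mc{G})$ if and only if $(b_k,b_l)\in\mc{E}(\mc{G})$, so the bijection $a_k\mapsto b_k$ is a graph isomorphism $\mc{G}_1^S\to\mc{G}_2^S$.

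For the claim that the two subgraphs attach to the rest of the graph identically, I would let $\mc{W}=\mc{V}\setminus(\mc{V}_1^S\cup\mc{V}_2^S)$ denote the fixed points of $\pi$. For every $w\in\mc{W}$ we have $\pi(w)=w$, so automorphism invariance again gives $(a_k,w)\in\mc{E}(\mc{G})$ if and only if $(b_k,w)\in\mc{E}(\mc{G})$. Hence each pair $a_k,b_k$ shares its neighbors in $\mc{W}$, the collective neighborhoods $\bigcup_{v\in\mc{V}_1^S}(\mc{N}_v(\mc{G})\cap\mc{W})$ and $\bigcup_{v\in\mc{V}_2^S}(\mc{N}_v(\mc{G})\cap\mc{W})$ coincide, and the count ``two or more'' follows from $m\geq 1$.

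The main obstacle is not the final algebra, which is routine once the partition is in hand, but justifying that partition: the $\kappa$-hop neighbor shells of $i$ and $j$ produced by Theorem \ref{thm:existence_cycle_length2} may overlap or share common vertices, and such shared vertices must be recognized as fixed points belonging to $\mc{W}$ rather than as transposed pairs. I would therefore lean on the theorem's conclusion that whole neighbor shells are swapped consistently, so that every edge crossing from $\mc{V}_1^S$ into $\mc{W}$ is matched by a corresponding edge from $\mc{V}_2^S$, leaving no unpaired adjacency and guaranteeing that $\mc{V}_1^S,\mc{V}_2^S,\mc{W}$ is a genuine partition respected by $\pi$.
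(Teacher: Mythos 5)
Your proof is correct and takes essentially the same route as the paper, which presents this corollary as an informal consequence of Theorem \ref{thm:existence_cycle_length2}: the involution constructed in that theorem's proof swaps two disjoint vertex subsets of identical induced structure while fixing the remaining vertices, and automorphism invariance of edges gives both the isomorphism and the common attachment to the rest of the graph. Your write-up merely makes explicit the partition into transposed pairs and fixed points (and the resulting checks) that the paper leaves implicit, including the observation --- shared by the paper --- that one must use the involution from the theorem's proof rather than only the bare statement that some $2$-cycle exists.
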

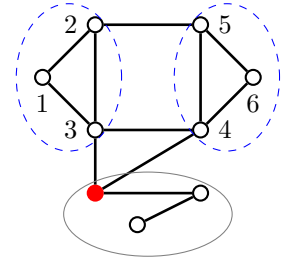
\begin{figure}[t]
\centering
\begin{tikzpicture}[scale=1.4]
\node[place] (4) at (1,0.) [label=right:$4$] {};
\node[place] (3) at (0,0) [label=left:$3$] {};
\node[place] (1) at (-0.5,.5) [label=below:$1$] {};
\node[place] (2) at (0,1.) [label=left:$2$] {};
\node[place] (5) at (1.,1.) [label=right:$5$] {};
\node[place] (6) at (1.5,0.5) [label=below:$6$] {};
\node[place, red] (7) at (0,-0.6) [] {};
\node[place] (8) at (1,-0.6) [] {};
\node[place] (9) at (0.4,-0.9) [] {};

\draw (2) [line width=1pt] -- node [left] {} (3) (1)--(3) (3)--(7);
\draw (3) [line width=1pt] -- node [left] {} (4) (7)--(8);
\draw (4) [line width=1pt] -- node [left] {} (5);
\draw (2) [line width=1pt] -- node [left] {} (5);
\draw (2) [line width=1pt] -- node [left] {} (1);
\draw (6) [line width=1pt] -- node [left] {} (5) 
	(6)--(4) 
	(4)--(7)
	(8)--(9);
\draw[dashed, rotate = 0, blue] (-0.25,0.5) ellipse (.5cm and 0.7cm);
\draw[dashed, rotate = 0, blue] (1.25,0.5) ellipse (.5cm and 0.7cm);
\draw[rotate = 0, gray] (0.5,-0.8) ellipse (.8cm and 0.4cm);
\end{tikzpicture}
\caption{Example of symmetric graph: two triangulation subgraphs (enclosed in dashed eclipses) are connected to the same (red) vertices in the rest of the graph.}
\label{fig:example_symmetric_graph}
\end{figure}
\subsection{Spectral properties of symmetric graphs}
We now can put together the results in Theorem \ref{thm:existence_cycle_length2} and Lem. \ref{lm:minus_one_eigenvalue} above to prove the following theorem.
\begin{Theorem}\label{thm:orthogonal_eigenvector}
If the adjacency matrix $\bm{A}$ of a symmetric graph $\mc{G}$ has simple spectrum, then there exists a nontrivial permutation $\bm{\Pi}\in \mathrm{Auto}(\mc{G})$ having an eigenvalue of $-1$ and $\bm{A}$ has an eigenvector orthogonal to $\bm{1}_n$.
\end{Theorem}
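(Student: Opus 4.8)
The plan is to combine Theorem~\ref{thm:existence_cycle_length2} and Lemma~\ref{lm:minus_one_eigenvalue} to first extract an automorphism carrying the eigenvalue $-1$, and then to exploit commutativity with $\bm{A}$ together with the simple-spectrum hypothesis to transfer that eigenvalue onto an eigenvector of $\bm{A}$. First, since $\mc{G}$ is symmetric, Theorem~\ref{thm:existence_cycle_length2} supplies a nontrivial $\bm{\Pi}\in\mathrm{Auto}(\mc{G})$ whose permutation graph $G_\pi$ contains a cycle of length $2$. Because a $2$-cycle is an even-length cycle, Lemma~\ref{lm:minus_one_eigenvalue} immediately gives that $\bm{\Pi}$ has an eigenvalue of $-1$, which settles the first assertion of the statement.

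Next I would use the defining relation $\bm{\Pi}\bm{A}=\bm{A}\bm{\Pi}$. Let $\{\bm{v}_1,\ldots,\bm{v}_n\}$ be the orthonormal eigenbasis of the symmetric matrix $\bm{A}$ with distinct eigenvalues $\lambda_1,\ldots,\lambda_n$. For each $i$, commutativity yields $\bm{A}(\bm{\Pi}\bm{v}_i)=\bm{\Pi}(\bm{A}\bm{v}_i)=\lambda_i(\bm{\Pi}\bm{v}_i)$, so $\bm{\Pi}\bm{v}_i$ lies in the $\lambda_i$-eigenspace of $\bm{A}$. Here the simple-spectrum hypothesis is essential: each such eigenspace is one-dimensional, which forces $\bm{\Pi}\bm{v}_i=\mu_i\bm{v}_i$ for some scalar $\mu_i$. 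As $\bm{\Pi}$ and $\bm{v}_i$ are real and $\bm{\Pi}$ is orthogonal, $\mu_i$ is real with $|\mu_i|=1$, hence $\mu_i\in\{+1,-1\}$. Thus the eigenbasis of $\bm{A}$ simultaneously diagonalizes $\bm{\Pi}$ with eigenvalues $\pm 1$, and since $\bm{\Pi}$ has $-1$ among its eigenvalues, at least one index $i$ must satisfy $\bm{\Pi}\bm{v}_i=-\bm{v}_i$.

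Finally, I would pin down orthogonality to $\bm{1}_n$ using the fact that every permutation matrix fixes the all-ones vector, $\bm{\Pi}\bm{1}_n=\bm{1}_n$. With $\bm{v}_i$ as above and $\bm{\Pi}^\top\bm{\Pi}=\bm{I}_n$, one computes
\begin{equation}
\bm{1}_n^\top\bm{v}_i=(\bm{\Pi}\bm{1}_n)^\top(\bm{\Pi}\bm{v}_i)=\bm{1}_n^\top(-\bm{v}_i)=-\bm{1}_n^\top\bm{v}_i,
\end{equation}
so $\bm{1}_n^\top\bm{v}_i=0$; that is, $\bm{v}_i$ is an eigenvector of $\bm{A}$ orthogonal to $\bm{1}_n$, which completes the argument.

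I expect the crux to be the middle step: the argument that commutativity plus a simple spectrum forces the eigenvectors of $\bm{A}$ to be eigenvectors of $\bm{\Pi}$ with eigenvalues confined to $\{+1,-1\}$ (equivalently, that every nontrivial automorphism is here an involution). Once this simultaneous-diagonalization picture is in place, matching the guaranteed $-1$ eigenvalue of $\bm{\Pi}$ to a specific $\bm{v}_i$ and invoking $\bm{\Pi}\bm{1}_n=\bm{1}_n$ are short and routine.
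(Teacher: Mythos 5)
Your proposal is correct, and it rests on the same pillars as the paper's own proof: commutativity $\bm{\Pi}\bm{A}=\bm{A}\bm{\Pi}$ plus the simple-spectrum hypothesis forces $\bm{\Pi}\bm{v}_i=\pm\bm{v}_i$ for every eigenvector of $\bm{A}$, and the final computation using $\bm{\Pi}\bm{1}_n=\bm{1}_n$ is identical. Where you differ is in the bookkeeping step that matches the $-1$ eigenvalue of $\bm{\Pi}$ to a specific $\bm{v}_i$. You fix $\bm{\Pi}$ \emph{at the outset} to be the automorphism with a $2$-cycle supplied by Theorem~\ref{thm:existence_cycle_length2}, get $-1\in\mathrm{spec}(\bm{\Pi})$ from Lemma~\ref{lm:minus_one_eigenvalue}, and then observe that since the eigenbasis of $\bm{A}$ diagonalizes $\bm{\Pi}$, the spectrum of $\bm{\Pi}$ is exactly the multiset $\{\mu_i\}\subseteq\{\pm 1\}$, so some $\mu_i=-1$. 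The paper instead starts from an \emph{arbitrary} nontrivial $\bm{\Pi}\in\mathrm{Auto}(\mc{G})$ and runs a dimension count: the $+1$-eigenspace of $\bm{\Pi}$ has dimension equal to the number of cycles of $G_\pi$, which is at most $n-1$ for a nontrivial permutation, so among the $n$ independent eigenvectors of $\bm{A}$ at least one must satisfy $\bm{\Pi}\bm{v}_i=-\bm{v}_i$; it then cites Theorem~\ref{thm:existence_cycle_length2} and Lemma~\ref{lm:minus_one_eigenvalue} as additional confirmation. Your route is arguably tighter: the paper's citation of those two results is not strictly justified for its arbitrary $\bm{\Pi}$ (Theorem~\ref{thm:existence_cycle_length2} produces \emph{some} automorphism with a $2$-cycle, not necessarily the one in hand), whereas your choice of $\bm{\Pi}$ makes that invocation airtight. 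What the paper's counting argument buys in exchange is generality: it shows that \emph{every} nontrivial automorphism produces an eigenvector of $\bm{A}$ orthogonal to $\bm{1}_n$, and it links the number of such eigenvectors to the number of $-1$ eigenvalues of $\bm{\Pi}$ (Corollary~\ref{coroll:number_of_eig}), which the paper exploits in the remark following the theorem.
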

\begin{proof}
Since $\mc{G}$ is symmetric, there exists a nontrivial $\bm{\Pi}\in \mc{P}_n$ so that $\bm{A\Pi}=\bm{\Pi A}$. Now, we assume that $\bm{v}_i$ is an eigenvector of $\bm{A}$ corresponding to eigenvalue $\lambda_i$. It then follows that 
\begin{align*}
\bm{A\Pi}\bm{v}_i&=\bm{\Pi A}\bm{v}_i= \lambda_i \bm{\Pi}\bm{v}_i\\
\Longleftrightarrow  \bm{A}(\bm{\Pi}\bm{v}_i)&= \lambda_i (\bm{\Pi}\bm{v}_i) \numberthis.
\end{align*}
Therefore, $\bm{\Pi}\bm{v}_i$ is an eigenvector of $\bm{A}$. Suppose that $\bm{A}$ has distinct eigenvalues as otherwise the graph $\mc{G}(\bm{A})$ is unfriendly. If $\lambda_i$ is simple, one has either $\bm{\Pi}\bm{v}_i=\bm{v}_i$ or $\bm{\Pi}\bm{v}_i=-\bm{v}_i$. Consequently, $\bm{v}_i$ is also an eigenvector of $\bm{\Pi}$ (the converse is not necessarily true). It is not hard to see that $\bm{\Pi}$ always has an eigenvalue of $1$ corresponding to an eigenvector of the form \begin{equation}
\bm{u}_j=[\underbrace{1,1,1,\ldots,1}_{l},\underbrace{0,\ldots,0}_{n-l}]^\top,
\end{equation} associated with a permutation cycle $\mc{C}_\pi^j$ (of any length) and a re-labeling of the vertices in $\mc{V}$ so that $\mc{V}(\mc{C}_\pi^j)=\{1,2,\ldots,l\}$, where $l:=|\mc{V}(\mc{C}_\pi^j)|$ (see Proof of Lemma \ref{lm:minus_one_eigenvalue}). The number of the independent eigenvectors $\bm{u}_j$ corresponding to the eigenvalue $\lambda =1$ of a nontrivial $\bm{\Pi}\in \mc{P}_n$ equals to the number of independent permutation cycles in $G_\pi$, which is at most $n-1$ (zero length cycles, i.e., identical mapping, are counted). Since $\bm{A}$ has $n$ independent eigenvectors $\bm{v}_i$, there exist some $\bm{v}_i$ (corresponding to the eigenvalues $-1$ of $\bm{\Pi}$) satisfying $\bm{\Pi}\bm{v}_i=-\bm{v}_i$.
Moreover, Theorem \ref{thm:existence_cycle_length2} and Lemma \ref{lm:minus_one_eigenvalue} verify that $\bm{\Pi}$ indeed has an eigenvalue of $-1$. It follows that 
\begin{equation*}
\bm{1}_n^\top\bm{\Pi}\bm{v}_i=\bm{1}_n^\top\bm{v}_i=- \bm{1}_n^\top\bm{v}_i\Rightarrow \bm{1}_n^\top\bm{v}_i =0,
\end{equation*}
for some eigenvector $\bm{v}_i$. This completes the proof.
\end{proof}

The proof of Theorem \ref{thm:orthogonal_eigenvector} above also indicates that the number of eigenvectors of the adjacency matrix $\bm{A}$ orthogonal to $\bm{1}_n$ is equal to the maximum number of eigenvalues $-1$ of $\bm{\Pi}$ among all possible permutation matrices $\bm{\Pi}\in \mathrm{Auto}(\mc{G})$ (Corollary \ref{coroll:number_of_eig}).
\section{Unfriendliness and uncontrollability of multi-agent systems}\label{sec:relation_to_uncontroll}
This section examines the relation of graph symmetry to uncontrollability of multi-agent systems with a controlled leader under the consensus dynamics.
\subsection{Agreement dynamics with a leader}
Consider a system of $n$ agents with a connected graph $\mc{G}$. Suppose that each agent $i$ is associated with a state $x_i\in\mb{R}$. Let $\bm{x}=[x_1,x_2,\ldots,x_n]\in\mb{R}^n$ be the aggregated state vector of the system. Then, the well-known agreement dynamics of the system is given as \cite{Saber2004tac}
\begin{equation}\label{eq:agreement_dyn}
\dot{\bm{x}}=-\bm{L}(\mc{G})\bm{x}.
\end{equation}

Assume that there is a leader, say the last node $n$, whose state $x_n = u(t)$ for an exogenous control $u(t)\in \mb{R}$; the other agents, called \textit{followers}, obey the agreement dynamics \eqref{eq:agreement_dyn}. Denote $\mc{V}_l=\{n\}$ and $\mc{V}_f=\{1,\ldots,n_f\},n_f := n-1,$ as the leader and and the follower set, respectively. Define $-\bm{l}_{fl}=[\delta_{n}(1),\ldots,\delta_{n}(n_f)]\in \mb{R}^{n_f},$ where the \textit{indicator function} $\delta_n(i)=1$ if $i\sim n$, and $0$ otherwise. The \textit{follower-leader degree matrix} is defined as $\bm{D}_{fl}(\mc{G})=\mathrm{diag}(\{\delta_{n}(i)\}_{i=1}^{n_f})$. 

Let $\bm{x}_f=[x_1,x_2,\ldots,x_{n_f}]\in\mb{R}^{n_f}$; then the dynamics of the followers' state vector $\bm{x}_f$ can be obtained as the following controlled linear time-invariant system \cite{Rahmani2009siam}
\begin{equation}\label{eq:controlled_LTI}
\dot{\bm{x}}_f=-\bm{L}_f(\mc{G})\bm{x}_f-\bm{l}_{fl}u(t),
\end{equation}
where
$\bm{L}_f(\mc{G}):=\bm{L}(\mc{G}_f)+\bm{D}_{fl}(\mc{G})$ with $\bm{L}(\mc{G}_f)\in \mb{R}^{n_f\times n_f}$ being the Laplacian matrix of the graph of followers. 

\subsection{Sufficient condition for uncontrollability of \eqref{eq:controlled_LTI}}
A condition on the spectral properties, which may be called friendliness properties, of the matrix $\bm{L}_f(\mc{G})$ for controllability of \eqref{eq:controlled_LTI} is given as follows \cite{Rahmani2009siam}.
\begin{Lemma}\label{lm:controllable_cond}
\cite{Rahmani2009siam} The networked system \eqref{eq:controlled_LTI} with a single leader is controllable if and only if $\bm{L}_f(\mc{G})$ have no eigenvectors orthogonal to $\bm{1}_{n_f}$. Furthermore, if $\bm{L}_f(\mc{G})$ does not have simple spectrum, then \eqref{eq:controlled_LTI} is not controllable.
\end{Lemma}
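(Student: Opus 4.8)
The plan is to read \eqref{eq:controlled_LTI} as a single-input linear time-invariant system $\dot{\bm{x}}_f=\bm{A}\bm{x}_f+\bm{b}u$ with $\bm{A}=-\bm{L}_f(\mc{G})$ and $\bm{b}=-\bm{l}_{fl}$, and to test its controllability with the Popov--Belevitch--Hautus (PBH) eigenvector criterion rather than the controllability matrix. Because $\bm{L}_f(\mc{G})$ is symmetric, $\bm{A}$ is symmetric too and shares its eigenvectors with $\bm{L}_f(\mc{G})$ (the left and right eigenvectors coincide). Hence PBH reads: the pair $(\bm{A},\bm{b})$ is controllable if and only if $\bm{v}_i^\top\bm{l}_{fl}\neq 0$ for every eigenvector $\bm{v}_i$ of $\bm{L}_f(\mc{G})$, the sign of $\bm{b}$ being irrelevant to a nonvanishing test.

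The crucial algebraic identity I would establish first is $\bm{l}_{fl}=-\bm{L}_f(\mc{G})\bm{1}_{n_f}$. I partition the full Laplacian into follower/leader blocks, $\bm{L}(\mc{G})=\left[\begin{smallmatrix}\bm{L}_f(\mc{G}) & \bm{l}_{fl}\\ \bm{l}_{fl}^\top & d_n\end{smallmatrix}\right]$, after checking that the top-left block is exactly $\bm{L}_f(\mc{G})=\bm{L}(\mc{G}_f)+\bm{D}_{fl}(\mc{G})$ (adding $\bm{D}_{fl}$ restores the follower degrees dropped when passing from $\mc{G}$ to $\mc{G}_f$, while off-diagonal entries are unchanged). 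Reading off the first block row of $\bm{L}(\mc{G})\bm{1}_n=\bm{0}$ then gives the identity.

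With this identity in hand, for an eigenpair $\bm{L}_f(\mc{G})\bm{v}_i=\mu_i\bm{v}_i$ I would compute $\bm{v}_i^\top\bm{l}_{fl}=-\bm{v}_i^\top\bm{L}_f(\mc{G})\bm{1}_{n_f}=-\mu_i\,(\bm{v}_i^\top\bm{1}_{n_f})$. Since $\mc{G}$ is connected, $\bm{L}_f(\mc{G})$ is positive definite — restricting the Laplacian quadratic form to vectors supported on the followers gives $\bm{z}^\top\bm{L}_f(\mc{G})\bm{z}=\hat{\bm{z}}^\top\bm{L}(\mc{G})\hat{\bm{z}}\geq 0$ with $\hat{\bm{z}}=[\bm{z}^\top,0]^\top$, and equality would force $\hat{\bm{z}}\in\ker\bm{L}(\mc{G})=\mathrm{span}(\bm{1}_n)$, i.e. $\bm{z}=\bm{0}$ — so every $\mu_i>0$. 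Consequently $\bm{v}_i^\top\bm{l}_{fl}=0$ if and only if $\bm{v}_i^\top\bm{1}_{n_f}=0$, and PBH yields the stated equivalence: controllability holds precisely when no eigenvector of $\bm{L}_f(\mc{G})$ is orthogonal to $\bm{1}_{n_f}$.

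For the ``furthermore'' part I would use that a single-input system cannot tolerate a repeated eigenvalue of a diagonalizable $\bm{A}$. If $\mu$ has multiplicity at least $2$, symmetry of $\bm{L}_f(\mc{G})$ makes the associated eigenspace at least two-dimensional, so the single linear condition $\bm{v}^\top\bm{l}_{fl}=0$ admits a nonzero solution $\bm{v}$ inside that eigenspace; this $\bm{v}$ violates PBH and forces uncontrollability. The main obstacle is pinning down the block identity $\bm{l}_{fl}=-\bm{L}_f(\mc{G})\bm{1}_{n_f}$ together with the positive-definiteness $\mu_i>0$ under connectivity: these are exactly what let me trade orthogonality-to-$\bm{l}_{fl}$ for orthogonality-to-$\bm{1}_{n_f}$, and the equivalence would break at a zero eigenvalue without the $\mu_i>0$ guarantee.
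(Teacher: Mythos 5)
Your proof is correct, but note that the paper itself offers no proof of this statement at all: Lemma \ref{lm:controllable_cond} is quoted directly from the cited reference \cite{Rahmani2009siam}, so there is no in-paper argument to compare against. What you have produced is a valid self-contained derivation of the cited result. Each step checks out: the block identification $\bm{L}(\mc{G})=\left[\begin{smallmatrix}\bm{L}_f(\mc{G}) & \bm{l}_{fl}\\ \bm{l}_{fl}^\top & d_n\end{smallmatrix}\right]$ is consistent with the paper's sign convention $-\bm{l}_{fl}=[\delta_n(1),\ldots,\delta_n(n_f)]$, since the off-diagonal Laplacian entries in the leader column are $-\delta_n(i)$; the identity $\bm{l}_{fl}=-\bm{L}_f(\mc{G})\bm{1}_{n_f}$ then follows from $\bm{L}(\mc{G})\bm{1}_n=\bm{0}$; and the positive definiteness of the grounded Laplacian $\bm{L}_f(\mc{G})$ uses exactly the connectivity hypothesis the paper imposes on $\mc{G}$, which is what makes the trade between orthogonality to $\bm{l}_{fl}$ (the raw PBH condition, valid for symmetric matrices since left and right eigenvectors coincide) and orthogonality to $\bm{1}_{n_f}$ legitimate. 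You are also right that this trade is the crux: at a zero eigenvalue the implication $\bm{v}_i^\top\bm{l}_{fl}=0\Rightarrow\bm{v}_i^\top\bm{1}_{n_f}=0$ would collapse. Your handling of the ``furthermore'' clause --- a repeated eigenvalue of a symmetric matrix gives a $\geq 2$-dimensional eigenspace, within which the single linear condition $\bm{v}^\top\bm{l}_{fl}=0$ always has a nonzero solution --- is the standard argument and is exactly why the eigenvector criterion in the first sentence implicitly presupposes simple spectrum. In short: correct, complete, and it supplies the proof that the paper delegates to \cite{Rahmani2009siam}.
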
 

In the presence of the leader $n$, the system is called \textit{leader symmetric} if there exists a nontrivial permutation matrix $\bm{\Pi}\in \mc{P}_{n_f}$ that satisfies $\bm{\Pi}\bm{L}_f=\bm{L}_f\bm{\Pi}$ (this equality is similar to Eq. \eqref{eq:sym_cond_laplacian} above). This symmetry condition is equivalent to having a symmetric graph of followers $\mc{G}_f$, and for each $i\sim n$, it holds $\pi(i)\sim n$ \cite[Prop. 5.13]{Rahmani2009siam}. In addition, a sufficient graph-theoretic condition for the system \eqref{eq:controlled_LTI} to be uncontrollable is that $\mc{G}$ is leader symmetric. 

In the light of Corollary \ref{coroll:copies_of_same_graph}, we have the following result.
\begin{Lemma}
Consider the system \eqref{eq:controlled_LTI} of $n$ agents with a controlled leader $n$ whose follower graph $\mc{G}_f$ is symmetric. If the leader $n$ is adjacent to both vertices in some  pairs of corresponding vertices in two subgraphs of symmetry then the system \eqref{eq:controlled_LTI} is uncontrollable.
\end{Lemma}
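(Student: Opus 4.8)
The plan is to reduce the claim to the \emph{leader symmetry} mechanism already in play in the preceding discussion: I will exhibit a nontrivial $\bm{\Pi}\in\mc{P}_{n_f}$ that commutes with $\bm{L}_f(\mc{G})$ and carries the eigenvalue $-1$, and then replay the orthogonality argument of Theorem \ref{thm:orthogonal_eigenvector} on the symmetric matrix $\bm{L}_f$ to produce an eigenvector orthogonal to $\bm{1}_{n_f}$. Uncontrollability will then follow immediately from Lemma \ref{lm:controllable_cond}.

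First I would invoke Corollary \ref{coroll:copies_of_same_graph}: since $\mc{G}_f$ is symmetric, it contains two subgraphs of symmetry on vertex sets $\mc{V}_1^S$ and $\mc{V}_2^S$ of identical structure that attach to the same set of remaining followers. Exchanging each vertex $i\in\mc{V}_1^S$ with its corresponding vertex $j\in\mc{V}_2^S$ (and fixing every other follower) defines a nontrivial automorphism $\bm{\Pi}$ of $\mc{G}_f$, so that $\bm{\Pi}\bm{L}(\mc{G}_f)=\bm{L}(\mc{G}_f)\bm{\Pi}$, and its permutation graph is a product of $2$-cycles. By Lemma \ref{lm:minus_one_eigenvalue} this $\bm{\Pi}$ therefore has an eigenvalue $-1$.

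The crux is to promote this automorphism of the follower graph to a symmetry of the full matrix $\bm{L}_f=\bm{L}(\mc{G}_f)+\bm{D}_{fl}$, which is symmetric because $\bm{L}(\mc{G}_f)$ is symmetric and $\bm{D}_{fl}=\mathrm{diag}(\delta_n(\cdot))$ is diagonal. Since $\bm{D}_{fl}$ is diagonal, $\bm{\Pi}$ commutes with it precisely when $\delta_n(i)=\delta_n(\pi(i))$ for every vertex, i.e. when the leader attaches symmetrically across the swapped pairs. This is exactly where the hypothesis enters: demanding that the leader $n$ be adjacent to both members of the corresponding pairs forces $\delta_n(i)=\delta_n(j)=1$ on those pairs, giving $\bm{\Pi}\bm{D}_{fl}=\bm{D}_{fl}\bm{\Pi}$ and hence $\bm{\Pi}\bm{L}_f=\bm{L}_f\bm{\Pi}$, so the system is leader symmetric. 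I expect this commutation to be the main obstacle, because the symmetry of $\mc{G}_f$ alone carries no information about the leader's attachment, and one must check that the leader-adjacency assumption covers every transposition appearing in $\bm{\Pi}$ rather than a single pair; if only part of the corresponding pairs are leader-symmetric I would instead restrict $\bm{\Pi}$ to the invariant transposition(s) that are, which still yields a $2$-cycle and preserves $\bm{D}_{fl}$.

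Finally, with $\bm{\Pi}\bm{L}_f=\bm{L}_f\bm{\Pi}$ established, I would mimic the proof of Theorem \ref{thm:orthogonal_eigenvector} on $\bm{L}_f$: if $\bm{L}_f$ has a repeated eigenvalue it is already uncontrollable by Lemma \ref{lm:controllable_cond}, so assume a simple spectrum, whence each eigenvector $\bm{v}$ of $\bm{L}_f$ satisfies $\bm{\Pi}\bm{v}=\pm\bm{v}$; by the eigenvector-counting argument in that proof and the fact that $\bm{\Pi}$ carries the eigenvalue $-1$, some $\bm{v}$ obeys $\bm{\Pi}\bm{v}=-\bm{v}$. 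Using $\bm{1}_{n_f}^\top\bm{\Pi}=\bm{1}_{n_f}^\top$ then gives $\bm{1}_{n_f}^\top\bm{v}=-\bm{1}_{n_f}^\top\bm{v}$, so $\bm{v}\perp\bm{1}_{n_f}$, and Lemma \ref{lm:controllable_cond} concludes that \eqref{eq:controlled_LTI} is uncontrollable. As a concrete sanity check in the case where a corresponding pair $(i,j)$ are twins, one can verify directly that $\bm{e}_i-\bm{e}_j$ (with $\bm{e}_i$ the $i$-th standard basis vector) is an eigenvector of $\bm{L}_f$ orthogonal to $\bm{1}_{n_f}$.
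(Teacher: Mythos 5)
Your main line of argument is essentially the paper's own proof: invoke Corollary \ref{coroll:copies_of_same_graph} to obtain the subgraphs of symmetry, take the automorphism $\bm{\Pi}$ composed of $2$-cycles swapping corresponding vertices, use the leader-adjacency hypothesis to get $\delta_{n}(i)=\delta_{n}(\pi(i))$ and hence $\bm{\Pi}\bm{L}_f=\bm{L}_f\bm{\Pi}$, then replay the argument of Theorem \ref{thm:orthogonal_eigenvector} on $\bm{L}_f$ (repeated eigenvalues, or else an eigenvector with $\bm{\Pi}\bm{v}=-\bm{v}$, hence $\bm{v}\perp\bm{1}_{n_f}$), and finish with Lemma \ref{lm:controllable_cond}. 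You are in fact more explicit than the paper about why commuting with the diagonal matrix $\bm{D}_{fl}$ requires $\delta_{n}(i)=\delta_{n}(\pi(i))$ at \emph{every} vertex, and about citing Lemma \ref{lm:minus_one_eigenvalue} for the eigenvalue $-1$; those are the right details to fill in.

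One step in your proposal would fail, however: the contingency plan of ``restricting $\bm{\Pi}$ to the invariant transpositions'' when only some corresponding pairs are leader-symmetric. A map obtained by keeping a subset of the $2$-cycles of an automorphism and fixing the remaining vertices is in general \emph{not} an automorphism of $\mc{G}_f$. The paper's own example in Fig. \ref{fig:example_symmetric_graph} shows this: the pairing there is $1\leftrightarrow 6$, $2\leftrightarrow 5$, $3\leftrightarrow 4$, and swapping only $2$ and $5$ while fixing everything else sends the edge $(2,3)$ to $(5,3)$, which is not an edge, so the restricted permutation does not commute with $\bm{L}(\mc{G}_f)$ and the construction collapses. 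If the leader is attached to one vertex of a moved pair but not to its partner, leader symmetry genuinely fails and neither your argument nor the paper's applies; the hypothesis must be read (as the paper implicitly does, and as your main argument effectively does) as saying that the leader's attachment respects the pairing on \emph{every} pair that $\bm{\Pi}$ moves, i.e., the leader's follower-neighborhood is a union of complete corresponding pairs and fixed vertices.
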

\begin{proof}
Since graph $\mc{G}_f$ is symmetric, there exist two subgraphs of symmetry, say $\mc{G}_1^S$ and $\mc{G}_2^S$, in $\mc{G}_f$ (Coroll. \ref{coroll:copies_of_same_graph}).
Consider a vertex mapping $\pi$ that permutes the vertices in the two subgraphs $\mc{G}_1^S$ and $\mc{G}_2^S$ of the follower graph $\mc{G}_f$ by permutation cycles of length $2$. Since the leader $n$ is adjacent to some pairs of corresponding vertices in the two subgraphs, i.e., if $i\sim n$ and so is $\pi(i)\sim n$ for some vertex $i\in \mc{V}(\mc{G}_1^S)$ (thus $\pi(i)\in \mc{V}(\mc{G}_2^S)$), $\delta_{n}(i)=\delta_{n}(\pi(i))$. Therefore, the corresponding automorphism $\bm{\Pi}$ of $\mc{G}_f$ satisfies the following  
\begin{equation}
\bm{\Pi}\bm{L}_f=\bm{L}_f\bm{\Pi},
\end{equation}
where
$\bm{L}_f:=\bm{L}(\mc{G}_f)+\mathrm{diag}(\{\delta_{n}(i)\}_{i=1}^{n_f})$ with $\bm{L}(\mc{G}_f)\in \mb{R}^{n_f\times n_f}$ being the Laplacian matrix of the graph $\mc{G}_f$. It follows from the preceding equation and by following similar lines as in Proof of Theorem \ref{thm:orthogonal_eigenvector}, we have that $\bm{L}_f$ has either repeated eigenvalues or eigenvectors orthogonal to $\bm{1}_{n_f}$. As a result, the system \eqref{eq:controlled_LTI} is uncontrollable according to Lemma \ref{lm:controllable_cond}.
\end{proof}

For example, suppose that the graph $\mc{G}_f$ is given as in Fig. \ref{fig:example_symmetric_graph} with two triangulation subgraphs of symmetry. The system \eqref{eq:controlled_LTI} is then uncontrollable if the leader $n$ is connected to both vertices in any or combinations of the vertex pairs $\{2,5\},\{1,6\}$ and $\{3,4\}$.

\section{Examples}\label{sec:examples}
\subsection{Simple symmetric graph of $6$ vertices}
 \begin{figure*}[t]
\centering
\begin{subfigure}[b]{0.2\textwidth}
\centering
\begin{tikzpicture}[scale=1.1]
\node[place] (4) at (1,0.) [label=right:$4$] {};
\node[place] (3) at (0,0) [label=left:$3$] {};
\node[place] (1) at (0,1.6) [label=left:$1$] {};
\node[place] (2) at (0,1.) [label=left:$2$] {};
\node[place] (5) at (1.,1.) [label=right:$5$] {};
\node[place] (6) at (1.,1.6) [label=right:$6$] {};

\draw (2) [line width=1pt] -- node [left] {} (3);
\draw (3) [line width=1pt] -- node [left] {} (4);
\draw (4) [line width=1pt] -- node [left] {} (5);
\draw (2) [line width=1pt] -- node [left] {} (5);
\draw (2) [line width=1pt] -- node [left] {} (1);
\draw (6) [line width=1pt] -- node [left] {} (5);
\draw[dashed, rotate = 0, gray] (0,0.8) ellipse (.18cm and 1.1cm);
\draw[dashed, rotate = 0, gray] (1,0.8) ellipse (.18cm and 1.1cm);
\end{tikzpicture}
\caption{$\mc{G}_f$}
\label{fig:graph_6}
\end{subfigure}
\begin{subfigure}[b]{0.25\textwidth}
\centering
\begin{tikzpicture}
 [>=stealth,
   shorten >=1pt,
   node distance=2cm,
   on grid,
   auto,
   every state/.style={draw=black, fill=black!6, very thick},scale =1.4
  ]
\tikzstyle{neuron}=[place,circle,inner sep=0,minimum size=10pt]
\node[neuron] (4) at (1,0.) [label=right:] {4};
\node[neuron] (3) at (0,0) [label=left:] {3};
\node[neuron] (2) at (0,1) [label=left:] {2};
\node[neuron] (1) at (-1,0) [label=left:] {1};
\node[neuron] (6) at (-0.7,1.) [] {6};
\node[neuron] (5) at (1.,1.) [] {5};
\path[->] 
   (1) edge[bend left,blue]     node                   {} (6)
   (6) edge[bend left,blue]     node                   {} (1)
   (3) edge[bend right,blue]     node                   {} (4)
   (4) edge[bend right,blue]     node                   {} (3)
   (5) edge[bend right,blue]     node                   {} (2)
   (2) edge[bend right,blue]     node                   {} (5);
\end{tikzpicture}
\caption{$G_\pi$}
\end{subfigure}
\begin{subfigure}[b]{0.4\textwidth}
\centering
\begin{equation*}
\bm{U}=
\left[\begin{smallmatrix}
-0.2319   &-0.5211    &0.4179   &-0.4179   &-0.5211    &0.2319\\
    0.5211    &0.4179   &-0.2319   &-0.2319   &-0.4179    &0.5211\\
   -0.4179   &-0.2319   &-0.5211    &0.5211   &-0.2319    &0.4179\\
    0.4179   &-0.2319    &0.5211    &0.5211   & 0.2319    &0.4179\\
   -0.5211    &0.4179    &0.2319   &-0.2319   & 0.4179    &0.5211\\
    0.2319   &-0.5211   &-0.4179   &-0.4179   & 0.5211    &0.2319
\end{smallmatrix}\right]
\end{equation*}
\caption{}
\end{subfigure}\\
\begin{subfigure}[t]{0.82\textwidth}
\centering
\begin{align*}
\lambda(\bm{A})=\{-2.24,-0.8019,-0.555,0.555,0.8019,-2.24\},~
\bm{U}^\top\bm{1}_6=[0,
   -0.6703,
    0,
   -0.2574,
    0,
    2.3419]^\top
\end{align*}
\caption{}
\end{subfigure}
\caption{A symmetric graph of $6$ vertices. $\bm{A}(\mc{G}_f)$ has simple spectrum, but it has three eigenvectors, given as columns of $\bm{U}$ (such that $\bm{A}=\bm{U}\mathrm{diag}(\lambda)\bm{U}^\top$), orthogonal to $\bm{1}_6$. Under the permutation $G_\pi$ the graph topology of $\mc{G}_f$ remains unchanged.}
\label{fig:sym_graph_6}
\end{figure*}

Consider a symmetric graph $\mc{G}_f$ of six vertices, as given in Fig. \ref{fig:graph_6}.
The adjacency matrix of the symmetric graph has distinct eigenvalues, but has three eigenvectors orthogonal to $\bm{1}_6$ (Fig. \ref{fig:sym_graph_6}) since there are three independent cycles of length $2$ in $G_\pi$ (Corollary \ref{coroll:number_of_eig}).

When connecting the leader node $7$ to the first node in $\mc{G}_f$, i.e., $\bm{l}_{fl}=[1,0,0,0,0,0]^\top$, the matrix $\bm{L}_f(\mc{G}):=\bm{L}(\mc{G}_f)+\mathrm{diag}(\bm{l}_{fl})$ given explicitly in \eqref{eq:L_f_controllable} below has distinct eigenvalues and no eigenvectors orthogonal to $\bm{1}_6$. Thus, the system \eqref{eq:controlled_LTI} is controllable.
\begin{equation}\label{eq:L_f_controllable}
\bm{L}_f(\mc{G})=\left[\begin{matrix}
2 &-1 &0 &0 &0 &0\\
-1 &3 &-1 &0 &-1 &0\\
0 &-1 &2 &-1 &0 &0\\
0 &0 &-1 &2 &-1 &0\\
0 &-1 &0 &-1 &3 &-1\\
0 &0 &0 &0 &-1 &1
\end{matrix}\right].
\end{equation}
In contrast, if the leader $7$ is connected to both nodes $1$ and $6$, the matrix $\bm{L}_f(\mc{G})$ has three eigenvectors orthogonal to $\bm{1}_6$. Consequently, the system \eqref{eq:controlled_LTI} is not controllable.
\subsection{Contiguous USA graph}
The "\textit{contiguous USA graph}" is the graph whose vertices represent the contiguous $48$ states of the United States plus the District of Columbia (DC) and whose ($107$) edges connect pairs of states (plus DC) that are connected by at least one drivable road \cite{Knuth2008}. The graph consists of $49$ vertices (red nodes) and $107$ edges (solid black lines) as shown in Fig. \ref{fig:US_Graph}. It can be verified by numerical computation that the adjacency matrix of the contiguous USA graph has distinct eigenvalues and no eigenvectors orthogonal to the vector of all ones. Thus, the graph is asymmetric, as it can also be seen from Fig. \ref{fig:US_Graph}.
\begin{figure}[t]
\centering
\includegraphics[width=0.49\textwidth]{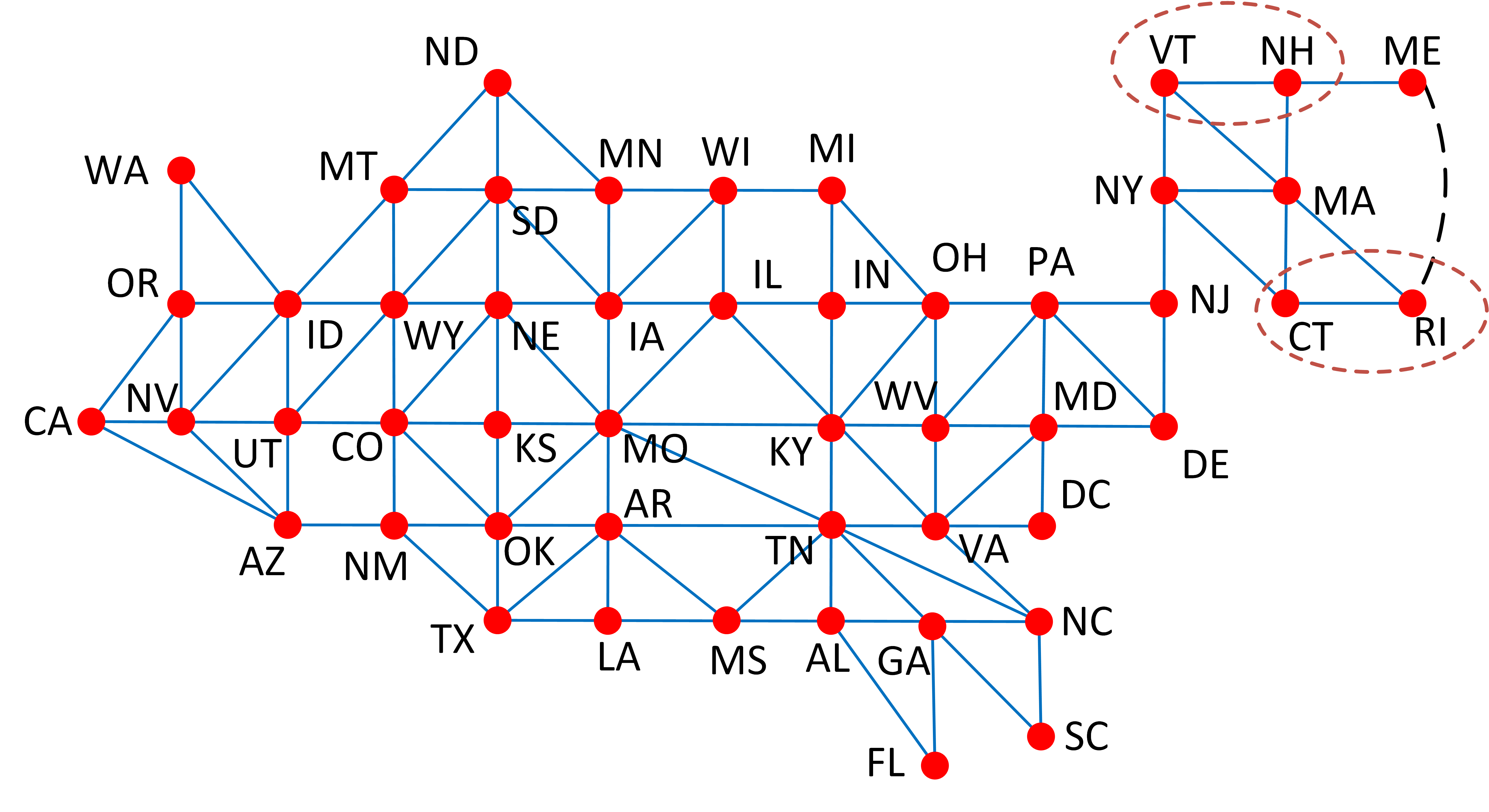}
\caption{Contiguous USA graph. An edge (dashed curve) is added between ME and RI to make the graph symmetric.}
\label{fig:US_Graph}
\end{figure}

To make the graph symmetric, we in addition insert an edge between nodes ME and RI. Due to the edge insertion, two subgraphs (enclosed in dashed eclipses) associated with $\{\text{VT},\text{NH}\}$ and $\{\text{CT},\text{RI}\}$ now have the same line graph structure and adjacent in the same way to the vertices $\{\text{NY, MA, ME}\}$ (cf. Fig. \ref{fig:US_Graph}). As a result, the permutation $\pi$ that exchanges the positions of the vertices $\{\text{VT},\text{NH}\}$ with $\{\text{CT},\text{RI}\}$ leaves the new graph topology unchanged. The permutation graph of such a permutation contains two cycles of length two. The adjacency matrix of the graph with the newly added edge has distinct eigenvalues, but has two eigenvectors orthogonal to $\bm{1}_{49}$.
\section{Conclusion}\label{sec:conclusion}
Unfriendliness of symmetric graphs was revisited in this paper. We first presented the spectral properties of permutation matrices through identification of the corresponding permutation graph representations. Second, symmetric structures of graphs in association with the permutation graphs of vertex permutations were explored. In particular, it was shown that symmetric graphs have two or more subgraphs that have the same graph topology and are adjacent to the same set of vertices. We then proved that if adjacency matrices of symmetric graphs have simple spectrum then they have eigenvectors orthogonal to the vector of all ones, thus possessing unfriendly properties. Several examples of both synthetic and real-world graphs were also given to illustrate the results.
\section*{Acknowledgments}
This work was supported by the National Research Foundation (NRF) of Korea under the grant NRF-2022R1A2B5B03001459, in part by the BK21 FOUR Program of the National Research Foundation Korea (NRF) grant funded by the Ministry of Education(MOE), and in part by the Future Mobility Testbed Development through IT, AI, and Robotics.
\nocite{} 
\bibliographystyle{IEEEtran}
\bibliography{IEEEabrv,quoc2018,quoc2019} 

\begin{thebibliography}{10}
\providecommand{\url}[1]{#1}
\csname url@samestyle\endcsname
\providecommand{\newblock}{\relax}
\providecommand{\bibinfo}[2]{#2}
\providecommand{\BIBentrySTDinterwordspacing}{\spaceskip=0pt\relax}
\providecommand{\BIBentryALTinterwordstretchfactor}{4}
\providecommand{\BIBentryALTinterwordspacing}{\spaceskip=\fontdimen2\font plus
\BIBentryALTinterwordstretchfactor\fontdimen3\font minus
  \fontdimen4\font\relax}
\providecommand{\BIBforeignlanguage}[2]{{%
\expandafter\ifx\csname l@#1\endcsname\relax
\typeout{** WARNING: IEEEtran.bst: No hyphenation pattern has been}%
\typeout{** loaded for the language `#1'. Using the pattern for}%
\typeout{** the default language instead.}%
\else
\language=\csname l@#1\endcsname
\fi
#2}}
\providecommand{\BIBdecl}{\relax}
\BIBdecl

\bibitem{Friedkin2016Sci}
N.~E. Friedkin, A.~V. Proskurnikov, R.~Tempo, and S.~E. Parsegov, ``Network
  science on belief system dynamics under logic constraints,'' \emph{Science},
  vol. 354, no. 6310, pp. 321--326, 2016.

\bibitem{Quoc2018tcns}
Q.~V. Tran, M.~H. Trinh, D.~Zelazo, D.~Mukherjee, and H.-S. Ahn, ``Finite-time
  bearing-only formation control via distributed global orientation
  estimation,'' \emph{IEEE Trans. Control Network Syst.}, vol.~2, no.~6, pp.
  702--712, 2019.

\bibitem{Bullo2020}
\BIBentryALTinterwordspacing
F.~Bullo, \emph{Lectures on Network Systems}, 1st~ed.\hskip 1em plus 0.5em
  minus 0.4em\relax Kindle Direct Publishing, 2020, with contributions by J.
  Cortes, F. Dorfler, and S. Martinez. [Online]. Available:
  \url{http://motion.me.ucsb.edu/book-lns}
\BIBentrySTDinterwordspacing

\bibitem{Quoc2020tcns}
Q.~V. Tran and H.-S. Ahn, ``Distributed formation control of mobile agents via
  global orientation estimation,'' \emph{IEEE Trans. Control Network Syst.},
  vol.~4, no.~7, pp. 1654--1664, 2020.

\bibitem{Mesbahi2010}
M.~Mesbahi and M.~Egerstedt, \emph{Graph Theoretic Methods in Multiagent
  Networks}.\hskip 1em plus 0.5em minus 0.4em\relax Princeton University Press,
  2010.

\bibitem{HSAhn2019}
H.-S. Ahn, \emph{Formation Control: Approaches for Distributed Agents}.\hskip
  1em plus 0.5em minus 0.4em\relax Springer International Publishing, 2019.

\bibitem{Quoc2020auto}
Q.~V. Tran, B.~D.~O. Anderson, and H.-S. Ahn, ``Pose localization of
  leader-follower networks with direction measurements,'' \emph{Automatica},
  vol. 120, p. 109125, 2020.

\bibitem{Quoc2021tcns}
Q.~V. {Tran}, M.~H. Trinh, and H.-S. {Ahn}, ``Discrete-time matrix weighted
  consensus,'' \emph{IEEE Trans. Control Network Syst.}, pp. 1--11, 2021.

\bibitem{DefferrardNIPS16}
M.~Defferrard, X.~Bresson, and P.~Vandergheynst, ``Convolutional neural
  networks on graphs with fast localized spectral filtering,'' in \emph{Adv.
  Neural Inf. Process. Syst.}, D.~Lee, M.~Sugiyama, U.~Luxburg, I.~Guyon, and
  R.~Garnett, Eds., vol.~29, 2016.

\bibitem{SeifertTSP21}
B.~Seifert and M.~Püschel, ``Digraph signal processing with generalized
  boundary conditions,'' \emph{IEEE Trans. Signal Process.}, vol.~69, pp.
  1422--1437, 2021.

\bibitem{Foggia2014}
P.~Foggia, G.~Percannella, and M.~Vento, ``Graph matching and learning in
  pattern recognition in the last $10$ years,'' \emph{Int. J. Pattern Recogn.
  Artif. Intell.}, vol.~28, no.~1, pp. 1\,450\,001--1--40, 2014.

\bibitem{Vogelstein2015PloSONE}
J.~T. Vogelstein, J.~M. Conroy, V.~Lyzinski, L.~J. Podrazik, S.~G. Kratzer,
  E.~T. Harley, D.~E. Fishkind, R.~J. Vogelstein, and C.~E. Priebe, ``Fast
  approximate quadratic programming for graph matching,'' \emph{PLoS ONE},
  vol.~10, no.~4, 2015, p. e0121002.

\bibitem{QuocCDC20}
Q.~V. Tran, Z.~Sun, B.~D.~O. Anderson, and H.-S. Ahn, ``Distributed computation
  of graph matching in multi-agent networks,'' in \emph{Proc. the 59th IEEE
  Confer. Decision Control (CDC)}.\hskip 1em plus 0.5em minus 0.4em\relax Proc.
  the 59th IEEE Confer. Decision Control (CDC), 2020, pp. 3139--3144.

\bibitem{Aflalo2015pnas}
Y.~Aflalo, A.~Bronstein, and R.~Kimmel, ``On convex relaxation of graph
  isomorphism,'' \emph{Proceedings of the National Academy of Sciences}, vol.
  112, no.~1, pp. 2942--2947, 2015.

\bibitem{Erdos1963}
P.~Erd{\~{o}}s and A.~R\'{e}nyi, ``Asymmetric graphs,'' \emph{Acta Math. Acad.
  Sci. Hungar}, vol.~14, pp. 295--315, 1963.

\bibitem{Ullmann2011}
J.~R. Ullmann, ``Bit-vector algorithms for binary constraint satisfaction and
  subgraph isomorphism,'' \emph{J. Exp. Algorithmics}, vol.~15, pp.
  1.6:1.1--64, 2011.

\bibitem{Caelli2004pami}
T.~Caelli and S.~Kosinov, ``An eigenspace projection clustering method for
  inexact graph matching,'' \emph{IEEE Trans. Pattern Anal. Mach. Intell.},
  vol.~26, no.~4, pp. 515--519, 2004.

\bibitem{Duchenne2011pami}
O.~Duchenne, F.~Bach, I.-S. Kweon, and J.~Ponce, ``A tensor-based algorithm for
  high order graph matching,'' \emph{IEEE Trans. Pattern Anal. Mach. Intell.},
  vol.~33, no.~12, pp. 2383--2395, 2011.

\bibitem{ZFan2020icml}
Z.~Fan, C.~Mao, Y.~Wu, and J.~Xu, ``Spectral graph matching and regularized
  quadratic relaxations: {A}lgorithm and theory,'' in \emph{Proc. the 37th IEEE
  Confer. Machine Learning (ICML)}, 2020.

\bibitem{Lyzinski2016pami}
V.~Lyzinski, D.~E. Fishkind, M.~Fiori, J.~T. Vogelstein, C.~E. Priebe, and
  G.~Sapiro, ``Graph matching: Relax at your own risk,'' \emph{IEEE Trans.
  Pattern Anal. Mach. Intell.}, vol.~38, no.~1, pp. 60--73, 2016.

\bibitem{Rahmani2009siam}
A.~Rahmani, M.~Ji, M.~Mesbahi, and M.~Egerstedt, ``Controllability of
  multi-agent systems from a graph-theoretic perspective,'' \emph{SIAM J.
  Control Optim.}, vol.~48, no.~1, pp. 162--186, 2009.

\bibitem{Quoc2022Tcyb}
Q.~V. Tran, Z.~Sun, B.~D.~O. Anderson, and H.-S. Ahn, ``Distributed
  optimization for graph matching,'' \emph{IEEE Trans. Cybern.}, pp. 1--14,
  2022.

\bibitem{Tao2017}
T.~Tao and V.~Vu, ``Random matrices have simple spectrum,''
  \emph{Combinatorica}, vol.~37, no.~3, pp. 539--553, 2017.

\bibitem{Hofstad2016}
R.~Hofstad, \emph{Random Graphs and Complex Networks: Volume 1}.\hskip 1em plus
  0.5em minus 0.4em\relax Cambridge, UK: Cambridge University Press, 2016.

\bibitem{Saber2004tac}
R.~Olfati-Saber and R.~M. Murray, ``Consensus problems in networks of agents
  with switching topology and time-delays,'' \emph{IEEE Trans. Autom. Control},
  vol.~49, no.~9, pp. 1520--1533, 2004.

\bibitem{Knuth2008}
D.~E. Knuth, \emph{The Art of Computer Programming, Volume 4, Fascicle 0:
  Introduction to Combinatorial Algorithms and Boolean Functions (Art of
  Computer Programming)}.\hskip 1em plus 0.5em minus 0.4em\relax Addison-Wesley
  Professional, 01 2008.

\end{thebibliography}

%
%
\end{document}